%%%%%%%%%%%%%%%%%%%%%%% file template.tex %%%%%%%%%%%%%%%%%%%%%%%%%
%
% This is a general template file for the LaTeX package SVJour3
% for Springer journals.          Springer Heidelberg 2010/09/16
%
% Copy it to a new file with a new name and use it as the basis
% for your article. Delete % signs as needed.
%
% This template includes a few options for different layouts and
% content for various journals. Please consult a previous issue of
% your journal as needed.
%
%%%%%%%%%%%%%%%%%%%%%%%%%%%%%%%%%%%%%%%%%%%%%%%%%%%%%%%%%%%%%%%%%%%
%
% First comes an example EPS file -- just ignore it and
% proceed on the \documentclass line
% your LaTeX will extract the file if required
% [arxiv_v2: filecontents example.eps stripped, 188 chars]

\RequirePackage{fix-cm}
\documentclass[smallextended]{svjour3}       % onecolumn (second format)
\smartqed  % flush right qed marks, e.g. at end of proof
\usepackage{graphicx}
%
%\usepackage{mathptmx}      % use Times fonts if available on your TeX system
%
% insert here the call for the packages your document requires
\usepackage{algorithm}
\usepackage{algorithmic}
\usepackage{amsmath,amsfonts}

% please place your own definitions here and don't use \def but
\newcommand{\ba}{\mbox{\boldmath $a$}}
\newcommand{\bb}{\mbox{\boldmath $b$}}

\newcommand{\bd}{\mbox{\boldmath $d$}}

 %NOTICE!

\newcommand{\bu}{\mbox{\boldmath $u$}}
\newcommand{\bv}{\mbox{\boldmath $v$}}

\newcommand{\bx}{\mbox{\boldmath $x$}}
\newcommand{\by}{\mbox{\boldmath $y$}}
\newcommand{\bz}{\mbox{\boldmath $z$}}

\newcommand{\bA}{\mbox{\boldmath $A$}}
\newcommand{\bB}{\mbox{\boldmath $B$}}

% Enumerate part

\newcommand{\XX}{{\cal X}}

\newcommand{\CC}{{\cal C}}

\newcommand{\GG}{{\cal G}}

\newcommand{\RR}{\mathbb R}

\newcommand{\Th}{{\rm Th}}
%
% Insert the name of "your journal" with
\journalname{}
%

%===========================================================

\begin{document}

\title{Linearly-Convergent FISTA Variant for Composite Optimization with Duality %\thanks{Grants or other notes
%about the article that should go on the front page should be
%placed here. General acknowledgments should be placed at the end of the article.}
}
%\subtitle{Do you have a subtitle?\\ If so, write it here}

\titlerunning{Linearly-Convergent FISTA Variant for Composite Optimization with Duality}        % if too long for running head

\author{$\text{Casey Garner}^{1}$  \and $\text{Shuzhong Zhang}^2$
         %etc.
}

\authorrunning{Casey Garner \and Shuzhong Zhang} % if too long for running head

\institute{
%{\bf1.} Corresponding Author
%\and 
{\bf1.} Department of Mathematics, University of Minnesota \at
              Church Street SE, Minneapolis, MN, 55414, USA \\
               {\it Corresponding Author E-mail:} {garne214@umn.edu}
%              Tel.: +123-45-678910\\
%              Fax: +123-45-678910\\
                         %  \\
%             \emph{Present address:} of F. Author  %  if needed
           \and
           {\bf2.} Department of Industrial and Systems Engineering, University of Minnesota \at
           Union Street SE, Minneapolis, MN, 55414, USA \\ 
           \email{zhangs@umn.edu}
}

\date{
$\bullet$ This preprint has not undergone peer review or any post-submission improvements or corrections. The Version of Record of this
article is published in {\it Journal of Scientific Computing}, and is available online at \url{https://doi.org/10.1007/s10915-023-02101-z}.
%Received: date / Accepted: date
}
% The correct dates will be entered by the editor

\maketitle

\begin{abstract}
Many large-scale optimization problems can be expressed as composite optimization models. Accelerated first-order methods such as the fast iterative shrinkage-thresholding algorithm (FISTA) have proven effective for numerous large composite models. In this paper, we present a new variation of FISTA, to be called C-FISTA, which obtains global linear convergence for a broader class of composite models than many of the latest FISTA variants. We demonstrate the versatility and effectiveness of C-FISTA through multiple numerical experiments on group Lasso, group logistic regression and geometric programming models. Furthermore, we utilize Fenchel duality to show C-FISTA can solve the dual of a finite sum convex optimization model.
\keywords{composite optimization \and  accelerated first-order algorithm \and Fenchel duality \and group Lasso}
% \PACS{PACS code1 \and PACS code2 \and more}
\subclass{90C25 \and 65K10 \and 49M29 \and 90C06}
\end{abstract}

%============================================================
\section{Introduction}
%============================================================
The following composite optimization model,
\begin{equation}\label{eqn:genCCO}
\begin{array}{lll}
& \min & {H}(\mathbf{B}(\bx)) + {R}(\bx) \\
& \text{s.t.} & \bx \in \mathcal{X}\subseteq \mathbb{R}^n,
\end{array}
\end{equation}
where $\mathcal{X}$ is a closed, convex subset of $\RR^n$, ${H}: \RR^m \rightarrow \RR$ is a smooth, convex function,  $R: \RR^n \rightarrow \RR$ is convex but potentially non-smooth, $\mathbf{B}:\RR^n \rightarrow \RR^m$ is a smooth mapping, and $H\circ \bB$ is a convex function over $\mathcal{X}$, has received ample attention in the literature \cite{BT09,BF95,DDL18,DDP19,N13,W90}. A special case of \eqref{eqn:genCCO} of immense importance is the additive composite optimization model,
\begin{equation}\label{eqn:ACCO}
\begin{array}{lll}
& \min & H(\bx) + R(\bx) \\
& \mbox{s.t.} & \bx \in \XX\subseteq \RR^n,
\end{array}
\end{equation}
which envelopes a plethora of models including: compressive sensing \cite{CP16}, Lasso \cite{T96}, and group Lasso and group logistic regression models \cite{QSG13,YLY11} as well as several machine learning constructs such as support vector machines. Due to their dimensionality, many large-scale optimization models have rendered second-order methods computationally impractical; thus, efficient and accelerated first-order algorithms have become essential for tackling numerous problems. With the advent of Nesterov's seminal work \cite{N83} much effort has been expended toward the development of accelerated first-order methods. For a comprehensive overview of this line of research see the monograph of d'Aspremont {\it et al.}\/ and the references therein \cite{AST21}. As an example, utilizing Nesterov acceleration, Beck and Teboulle developed the influential fast iterative shrinkage-thresholding algorithm (FISTA) \cite{BT09} which obtained the optimal sublinear convergence rate $\mathcal{O}(1/k^2)$ for \eqref{eqn:ACCO} with $H$ and $R$ convex and $R$ possibly non smooth. In recent years, under the assumption $H$ and potentially $R$ are strongly convex, many accelerated versions of FISTA for \eqref{eqn:ACCO} have been developed which obtain the optimal linear convergence rate proved by Nesterov \cite{CC19,CP16,FV19,FV20,RC21}.

Furthermore, recent work has been done to determine conditions which lighten the assumption of strong convexity while maintaining accelerated and at times linear convergence \cite{BS17,DDL18,ING19}. In \cite{DDL18} the authors demonstrate local linear convergence to a first-order stationary point for \eqref{eqn:genCCO} without assuming any strong convexity on $H$. The authors extended the work in \cite{ZZ17} by utilizing a specific error bound condition (Definition 3.1 in \cite{DDL18}) in tandem with an assumption on the quadratic growth of the objective function.

In this paper we develop an accelerated composite version of FISTA, {C-FISTA}, similar to the recent FISTA variants which extends to solving model \eqref{eqn:genCCO} as well as model \eqref{eqn:ACCO}. Also, in line with the recent works \cite{DDL18,DDP19}, we forgo any assumption guaranteeing strong convexity of the objective function in \eqref{eqn:genCCO} and prove {C-FISTA} obtains global linear convergence under certain error bound conditions.
%============================================================
\subsection{Main Contributions}
This paper presents three main contributions. The first contribution of this paper is the development of an accelerated version of FISTA which obtains global linear convergence for \eqref{eqn:genCCO} under specific assumptions. Our algorithm, {C-FISTA}, differs from the aforementioned FISTA variants because they were either exclusively designed for the additive composite model \eqref{eqn:ACCO} or only obtained local instead of global linear convergence for the more generalized model \eqref{eqn:genCCO}. We demonstrate that {C-FISTA} is a generalization of the FISTA variant GFISTA \cite{CC19,CP16} when $\bB(\bx) = \bx$ proving {C-FISTA} is an extension of recent algorithmic developments. Further, we present an alternative convergence analysis than those presented for the other accelerated FISTA variants and exclude any reference to the strong convexity parameter of $R$ as done in \cite{CC19,FV19,FV20}.

The second contribution of this paper is our utilization of Fenchel duality to develop dual models which can be solved via {C-FISTA}, and our leveraging of Fenchel duality theory \cite{HL88,RF70,Roos20} to enable efficient computation of the subproblems of {C-FISTA}. Additionally, we outline a dual algorithmic approach for a general convex model which can be solved by {C-FISTA} and generalizes the approaches of Han and Lou \cite{HL88} and Auslender \cite{A92} among others \cite{FHN96,GM89}. Sections \ref{Fenchel Duality} and \ref{duality-subproblems} provide the relevant background on Fenchel duality and provide an example on how we utilize the theory in the implementation of {C-FISTA}.

The third contribution of this paper is the presentation of globally linearly convergent algorithms for solving various Lasso and logistic regression models with {C-FISTA}. In our numerical experiments, we demonstrate {C-FISTA} outperforms FISTA \cite{BT09} and the SLEP software package \cite{SLEP} for solving group, sparse-group and overlapping sparse-group Lasso models and sparse-group logistic regression models. We also compare {C-FISTA} with ADMM on the Lasso models, which has proven linear convergence for various Lasso formulations \cite{HQL17}, and demonstrate superior convergence after tuning ADMM in our numerical experiments. Lastly, we demonstrate C-FISTA's applicability for solving a class of geometric programming models.

%============================================================
\subsection{Organization of the Paper}
In Section \ref{C-FISTA} we present {C-FISTA} and prove the global linear convergence of the algorithm under certain assumptions. Section \ref{Lasso and log reg models} motivates {C-FISTA} by describing the Lasso, logistic regression and geometric programming models which are solvable with {C-FISTA}. In Section \ref{Fenchel Duality} we present the necessary background on Fenchel duality and make connections to our accelerated FISTA algorithm, and Section \ref{duality-subproblems}  provides an example of how Fenchel duality informs our application of {C-FISTA} to solve the crucial proximal mapping step. Section \ref{numerical results} contains the algorithms for solving the Lasso, sparse-group logistic regression and geometric programming models with {C-FISTA}, and our numerical experiments comparing {C-FISTA} to other efficient first-order algorithms. The paper concludes in Section \ref{conclusions} with some final remarks and potential avenues for future investigation.

%============================================================================
\section{A Generalized Composite Optimization Algorithm}\label{C-FISTA}
In this paper, our primary focus revolves around the composite optimization model,
\begin{equation} \label{genSCOPT}
\begin{array}{lll}
    & \min & H( \bB(\bx)) + R(\bx) \\
    & \mbox{s.t.} & \bx \in \XX \subseteq \RR^n, \nonumber
\end{array}
\end{equation}
where $\mathcal{X}$ is a closed, convex subset of $\RR^n$, $H : \RR^m \rightarrow \RR$ is a smooth, convex function,
%strongly convex with parameter $\mu>0$ and has Lipschitz gradient with parameter $L > 0$,  %{\color{blue} and $H$ is componentwise nondecreasing},
$R : \RR^n \rightarrow \RR$ is convex but potentially non-smooth, $\bB: \RR^n \rightarrow \RR^m$ is smooth and defined as $\bB(\bx) = \left( B_1(\bx), \cdots, B_m(\bx)\right)^\top$ with Jacobian, $\mathbf{J_B}(\bx)$, at $\bx\in \RR^n$. For convenience in our analysis we denote $F(\bx):=H(\bB(\bx))+R(\bx)$. A key assumption for feasible implementation is that the following proximal mapping with respect to $R$ is efficiently computable:
\[
\mbox{\rm Prox}_{ t R}(\bu):=
\mbox{arg}\min_{\bx\in \XX}\left\{ \frac{1}{2} \| \bx - \bu\|^2 + t R(\bx)\right\},
\]
where $t>0$. We now state the four main assumptions in our analysis: 
\vspace{0.1in}
\begin{itemize}
\item[{\bf(A0)}] $H$ is strongly convex with parameter $\mu$ and gradient Lipschitz with parameter $L$, and $H\circ \bB$ is convex over $\XX$.
\vspace{1mm}
\item[{\bf(A1)}] $B_i \in C^2(\XX)$ is Lipschitz continuous with parameter $\bar L_i \geq 0$, and has Lipschitz gradient constant $L_i \geq 0 $ for $i=1,\hdots, m$.
\vspace{1mm}
\item[{\bf (A2)}] There exists $\tau > 0$ such that $\tau \| \bx- \by \|^2 \leq \|\bB(\bx) - \bB(\by)\|^2$ for all $\bx,\by \in \text{span}(\mathcal{X})$.
\vspace{1mm}
\item[\bf{(A3)}] There exists $\xi \ge  0$ such that  $\forall \; \bx, \by \in \text{span}(\mathcal{X})$ with $\bar{\by} := \bB(\by)$,
\[ \bigg|\nabla_{\bar{\by}} H(\bar{\by})^\top \left( \mathbf{J_B}(\bx) - \mathbf{J_B}(\by) \right)(\bx - \by)\bigg| \leq \xi \| \bx - \by\|^2.\]
\end{itemize}

Our analysis focuses on the proposed algorithm C-FISTA (Algorithm \ref{alg:C FISTA alg}). However, before stating the algorithm and proving our convergence results, we further frame the assumptions. 

\begin{algorithm}
\caption{C-FISTA}
\begin{algorithmic}\label{alg:C FISTA alg}
\STATE \textbf{Input:} Constants $r, L, \mu, \xi$ and $\tau$. (See assumptions (A2), (A3), and \eqref{upper-bound})
\STATE \textbf{Step 0.} Choose any $(\bx^0,\bz^0) \in \XX\times \XX$. Let $k:=0$
\STATE \textbf{Step 1.} Let
\begin{equation} \label{y-def2}
\by^k :=\frac{1}{1+\theta} \, \bx^k + \frac{\theta}{1+\theta} \, \bz^k,
\end{equation}
and 
\begin{eqnarray}\label{eqn:x-def}
\hspace{0.2in} \bx^{k+1} &:=& \mbox{arg}\min_{\bx\in \XX}  H(\bar \by^k) + \nabla_{\by} H(\bar \by^k)^\top (\bx-\by^k) + \frac{rL}{2} \| \bx - \by^k\|^2 + R(\bx), \label{x-def2} \\
&=& \mbox{\rm Prox}_{ R / (rL) }\left(\by^k-\frac{1}{rL} \nabla_{\by} H(\bar \by^k)\right), \nonumber
\end{eqnarray}
and
\begin{equation} \label{z-def2}
\bz^{k+1}:= (1-\theta) \bz^k + \theta \by^k + \alpha (\bx^{k+1}- \by^k),
\end{equation}
%\alpha:=\sqrt{\frac{r L - \xi}{\tau \mu - \xi}},\,\, \theta := \frac{\sqrt{(\tau\mu - \xi)(rL -\xi)}}{rL}
where from \eqref{eqn:thetaANDalpha}, 
\[ \theta := \frac{\sqrt{(\tau\mu - \xi)(rL -\xi)}}{rL}\;\; \text{ and } \;\; \alpha:=\sqrt{\frac{r L - \xi}{\tau \mu - \xi}}.\]%\vspace{3mm}
\STATE \textbf{Step 2.} Let $k:=k+1$ and return to {\bf Step 1} until convergence.
\end{algorithmic}
\end{algorithm}

%===========================================================
\subsection{ Discussion of Assumptions}\label{sec:assumptions}

In this section we further detail the assumptions and demonstrate how they compare to standard assumptions in the literature. Before expounding upon (A0) - (A3), we first make the essential observation that under these assumptions it is possible $H(\bB(\bx))$ is not strongly convex in $\bx$. Letting $\bar{\bx}=\bB(\bx)$, we do see, however, that the descent inequality and the strongly convex inequality still hold for $H$ with respect to $\bar{\bx}$ and $\bar{\by}$, i.e.\ for all $\bar{\bx}, \bar{\by} \in \RR^m$,

\vspace{-0.1in}
\begin{eqnarray}
H(\bar{\by}) \le H(\bar \bx) + \nabla_{\bar{\bx}} H(\bar{\bx})^\top (\bar{\by} - \bar{\bx}) + \frac{L}{2} \| \bar{\by} - \bar{\bx}\|^2, \nonumber \\
H(\bar{\by}) \ge H(\bar{\bx}) + \nabla_{\bar{\bx}} H(\bar{\bx})^\top (\bar{\by} - \bar{\bx}) + \frac{\mu}{2} \| \bar{\by} - \bar{\bx}\|^2, \nonumber
\end{eqnarray}
where $\nabla_{\bar{\bx}} H(\bar{\bx}) := \nabla H (\by) \big|_{\by =\bB(\bx)}$ and
\[\nabla_{\bx} H(\bar{\bx}) = \nabla_{\bx} H(\bB(\bx)) = \mathbf{J_B}(\bx)^\top \nabla_{\bar{\bx}} H(\bar{\bx}). \]
For the remainder of the paper, we will reserve $\bar{\bx}$ to denote $\bB(\bx)$, that is $\bar{\bx}:=\bB(\bx)$.

Of the assumptions, (A0) and (A1), are the most straightforward. The first assumption is standard in the literature, and the second ensures $\bB$ is sufficiently well-behaved in terms of its continuity and differentiability; however, no convexity assumptions are made directly on the component functions $B_i$. 

\begin{remark}
From assumption (A1) we obtain a few important inequalities. First, utilizing the fact each $B_i$ is Lipschitz continuous, it follows from the limit definition of the directional derivative that $\| \nabla B_i (\bx) \|^2 \leq \bar L_i^2$ for all $\bx\in \RR^n$. Therefore, letting $\bx\in \RR^n$ be fixed we see that,
\begin{align}
\|\mathbf{J_B}(\bx)\| &= \sup_{\mathbf{v} \in \RR^n} \left\{ \|\mathbf{J_B}(\bx) \bv \| \; \bigg| \; \|\bv\| =1  \right\} \nonumber \\
			&= \sup_{\bv \in \RR^n} \left\{ \sqrt{ \sum_{i=1}^{m} \left( \nabla B_i(\bx)^\top \bv \right)^2 } \; \bigg| \; \|\bv\| =1  \right\} \nonumber \\
			&\leq \sqrt{ \bar L_1^2 + \hdots + \bar L_m^2 }. \nonumber
\end{align}
So, defining,
\begin{equation}\label{eqn:r}
r:=  \bar L_1^2 + \cdots + \bar L_m^2,
\end{equation}
we have,
\begin{equation}\label{eqn:jacob_bound}
\|\mathbf{J_B}(\bx)\|^2 \leq r, \;\; \forall \bx \in \RR^n.
\end{equation}

As a result of \eqref{eqn:jacob_bound} we obtain a second important inequality for our analysis. Letting $\bx,\by \in\RR^n$, by the differentiability of $\bB$,
\[
\bB(\by) - \bB(\bx)  = \int_{0}^{1} \mathbf{J_B}\left( \bx + t(\by-\bx) \right)(\by-\bx)dt,
\]
\vspace{-0.1in}
which by the triangle inequality and \eqref{eqn:jacob_bound} implies,
\[
\|\bB(\by) - \bB(\bx)\| \leq \int_{0}^{1} \| \mathbf{J_B}\left( \bx + t(\by-\bx) \right)\|\cdot \|\by-\bx\| dt \leq \sqrt{r} \| \by -\bx \|,\]
and so,
\
\begin{equation}\label{upper-bound}
\| \bar \by  - \bar \bx \|^2  =  \|\bB(\by) - \bB(\bx) \|^2 \leq r \|\by - \bx \|^2, \;\forall\; \bx ,\by \in \RR^n.
\end{equation}
\end{remark}

In regards to (A2), we note it is a condition which depends as much on the constraint set $\XX$ as it does on the function $\bB$. For example, if $\bB(\bx) = \bA\bx-\bb$ with $\bA \in \mathbb{R}^{m \times n}$ singular, then  (A2) will fail to hold if $\text{span}{(\XX)} = \mathbb{R}^n$; however, it will be satisfied if $\text{span}{(\XX)} \cap \text{null}(\bA) = \{\mathbf{0}\}$. Additionally, this condition is fundamentally different than growth conditions about the set of optimal solutions such as given in \cite{DDL18} because it places no condition on the proximal mapping nor relates to the set of optimal solutions of \eqref{eqn:genCCO}. 

Assumption (A3) by all appearances is the most opaque; however, (A3) is essentially equivalent to assuming $H\circ \bB$ is gradient Lipschitz continuous. If it is assumed $H\circ \bB$ is gradient Lipschitz with parameter $\hat{L} >0$, then (A3) follows with $\xi = \hat{L} + rL$. Similarly,  if assumption (A3) holds, then $H \circ \bB$ is gradient Lipschitz with parameter $\xi + rL$. Therefore, this assumption is fundamentally a constraint on the gradient of $H\circ \bB$ which is unlike standard error bound or growth conditions found in the literature.

As a closing remark on the assumptions, which we will revisit following the proof of Theorem \ref{thm:iter-complexity}, although (A2) and (A3) are stated in a global fashion they only need to hold near the iterates generated by Algorithm \ref{alg:C FISTA alg}. For the sake of our argument, we require our assumptions to hold on $\text{span}(\XX)$; however, in practice this is unnecessary and we demonstrate this through several numerical experiments in Section \ref{numerical results}. 

\subsubsection{Example Models} Before beginning our analysis of Algorithm \ref{alg:C FISTA alg} it makes sense to introduce a few models which will appear frequently in the applications presented in the forthcoming sections, and provide an exposition on their relationship to the stated assumptions.
\vspace{0.1in}

\begin{itemize}
\item 
If  $\bB(\bx)$ is affine linear, i.e. $\bB(\bx) = \bA\bx - \bb$ for $\bA \in \RR^{ m \times n }$ and $\bb\in \RR^m$, then (A1) is met and the convexity of $H\circ \bB$ follows readily without the need for any additional assumptions on $H$. Furthermore, (A3) is trivially satisfied with $\xi = 0$ since $\mathbf{J_B}(\bx) = \bA$. As previously stated, (A2) will hold provided $ \text{span}(\mathcal{X}) \cap \text{null}(\bA) = \{\mathbf{0}\}$. Finally, from \eqref{eqn:r} we see $r$ in \eqref{upper-bound} can by given as the largest eigenvalue of $\bA^\top \bA$. Thus, in this setting, we always have $ r \leq \lambda_{\text{max}}(\bA^\top \bA)$ and $\xi = 0$. \\

\item In the geometric programming model discussed in Section \ref{GeoProg}, we have $\bB(\bx) = \left(\ln(x_1), \hdots, \ln(x_n) \right)^\top$ with $\XX$ a closed, bounded, convex subset of the positive orthant in $\RR^n$. Thus, (A1) holds  and, as will be shown in Section \ref{GeoProg}, constants $r$ and $\tau$ can be computed which satisfy the necessary inequalities over the constraint set. This example is significant because it demonstrates non-linear and non-convex choices for $\bB$ are admissible for certain models.
\end{itemize}

\vspace{-0.2in}
\subsection{C-FISTA Convergence Analysis} 

We now state the formal description of the C-FISTA algorithm, and present the main convergence result which provides a global linear convergence guarantee. 

\begin{theorem} \label{thm:iter-complexity}
Assuming assumptions (A1) - (A3) hold with $ \tau \mu  - \xi> 0$ and $H \circ \bB$ convex, then {C-FISTA} has an accelerated global linear rate of convergence for \eqref{eqn:genCCO}; that is, for each iteration, we have,
\begin{multline}
F(\bx^{k+1}) - F(\bx^*) + \left(\frac{\tau \mu - \xi}{2}\right) \|\bz^{k+1} - \bx^*\|^2 \le \nonumber \\ \left(1- \frac{\sqrt{(\tau\mu - \xi)(rL -\xi)}}{rL} \right) \left( F(\bx^{k}) - F(\bx^*) +\left( \frac{\tau \mu - \xi}{2}\right) \|\bz^{k} - \bx^*\|^2 \right).
\end{multline}
In particular, if we perform C-FISTA for $k$ iterations, then it holds that,
\begin{multline}
F(\bx^{k}) - F(\bx^*) \le \nonumber \\ \left( 1 -\frac{\sqrt{(\tau\mu - \xi)(rL -\xi)}}{rL}\right)^k \left( F(\bx^{0}) - F(\bx^*) + \left(\frac{\tau \mu - \xi}{2}\right) \|\bz^{0} - \bx^*\|^2 \right). \nonumber 
\end{multline}
\end{theorem}

\begin{proof}
By the Lipschitz inequality for $H$,
\begin{eqnarray}
&H&(\bar \bx^{k+1}) \nonumber \\
& \le & H(\bar \by^{k}) + \nabla_{\bar \by} H(\bar \by^k)^\top \left( \bar \bx^{k+1} - \bar \by^k \right) + \frac{L}{2} \| \bar \bx^{k+1} - \bar \by^{k} \|^2,  \nonumber \\
& \le & H(\bar \by^{k}) + \nabla_{\bar \by} H(\bar \by^k)^\top \left( \bB( \bx^{k+1}) - \bB(\by^k) \right) + \frac{rL}{2} \| \bx^{k+1} -  \by^{k} \|^2,  \nonumber \\
& = & H(\bar \by^{k}) + \nabla_{\bar \by} H(\bar \by^k)^\top \int_{0}^{1} \mathbf{J_B}\left(\by^k + t\left(\bx^{k+1} - \by^k\right)\right)\left( \bx^{k+1} - \by^k \right)dt \nonumber \\
&&+ \frac{rL}{2} \| \bx^{k+1} -  \by^{k} \|^2,  \nonumber \\
& = & H(\bar \by^{k}) + \nabla_{\bar \by} H(\bar \by^k)^\top \hspace{-0.07in} \int_{0}^{1} \hspace{-0.07in}  \left[ \mathbf{J_B}\left(\by^k + t\left(\bx^{k+1} - \by^k\right)\right) - \mathbf{J_B}\left( \by^k\right) \right] \left( \bx^{k+1} - \by^k \right)dt  \nonumber  \\ & &+ \nabla_{\bar \by} H(\bar \by^k)^\top \mathbf{J_B}\left( \by^k\right) \left( \bx^{k+1} - \by^k \right) +   \frac{rL}{2} \| \bx^{k+1} -  \by^{k} \|^2,  \nonumber
\end{eqnarray}
where the second line follows from \eqref{upper-bound} and the last lines are a result of the Newton-Leibniz formula.  Applying assumption (A3) we obtain,
\begin{eqnarray}
&H&(\bar \bx^{k+1}) \nonumber \\
&\leq & H(\bar \by^{k}) + \hspace{-0.07in} \int_{0}^{1} \hspace{-0.07in} \xi t \| \bx^{k+1} - \by^k \|^2 dt  \nonumber + \nabla_{\by} H(\bar \by^k)^\top \left( \bx^{k+1} - \by^k \right) +   \frac{rL}{2} \| \bx^{k+1} -  \by^{k} \|^2,  \nonumber \\
& = & H(\bar \by^{k}) + \nabla_{\by} H(\bar \by^k)^\top \left( \bx^{k+1} - \by^k \right) +   \frac{rL + \xi}{2} \| \bx^{k+1} -  \by^{k} \|^2. \nonumber
\end{eqnarray}
Utilizing the strong convexity of $H$, for all $x\in \XX$ we have,
\begin{eqnarray}%\label{H_bound}
H(\bar \bx^{k+1}) & \le & H(\bar \bx)  - \nabla_{\bar \by} H( \bar \by^k )^\top \left( \bar{\bx} - \bar{\by}^k \right) - \frac{\mu}{2} \| \bar \bx - \bar \by^k \|^2 \nonumber \\
&& \hspace{0.75in} + \nabla_{\by} H(\bar \by^k)^\top \left( \bx^{k+1} - \by^k \right) +  \frac{rL + \xi}{2} \| \bx^{k+1} -  \by^{k}\|^2, \nonumber \\
& \le & H(\bar \bx)  - \nabla_{\bar \by} H( \bar \by^k )^\top \left( \bar{\bx} - \bar{\by}^k \right) + \nabla_{\by} H(\bar \by^k)^\top \left( \bx^{k+1} - \by^k \right)  \nonumber \\
&&\hspace{0.75in}+   \frac{rL + \xi}{2} \| \bx^{k+1} -  \by^{k}\|^2 - \frac{\tau \mu}{2} \| \bx - \by^k \|^2, \nonumber
\end{eqnarray}
where the second inequality follows from (A2). Applying (A3) and utilizing a similar argument with the Newton-Leibniz formula it follows, 
\begin{eqnarray}\label{H_bound}
H(\bar \bx^{k+1}) & \le & H(\bar \bx)  + \frac{\xi}{2}\| \bx - \by^k \|^2 - \nabla_{\by} H(\bar \by^k)^\top \left( \bx - \by^k \right) \nonumber \\ & &  \hspace{0.05in} + \nabla_{\by} H(\bar \by^k)^\top \left( \bx^{k+1} - \by^k \right)  +   \frac{rL + \xi}{2} \| \bx^{k+1} -  \by^{k}\|^2 - \frac{\tau \mu}{2} \| \bx - \by^k \|^2 \nonumber \\
 &\le& H(\bar \bx)  + \nabla_{\by} H(\bar \by^k)^\top \left( \bx^{k+1} - \bx \right) \nonumber \\
 && \hspace{0.05in} +   \frac{rL + \xi}{2} \| \bx^{k+1} -  \by^{k}\|^2 - \frac{\tau \mu - \xi}{2} \| \bx - \by^k \|^2.
\end{eqnarray}
By the first order optimality conditions of \eqref{x-def2} we have,
\[
\left( \nabla_{\by} H(\bar \by^k) + r L \left( \bx^{k+1} - \by^k \right) + R'(\bx^{k+1}) \right)^\top \left( \bx - \bx^{k+1} \right) \ge 0,\,\, \mbox{ for all $\bx\in \XX$},
\]
where $R'(\bx^{k+1})$ is an element of the subgradient of $R$ at $\bx^{k+1}$. The optimality conditions above along with the convexity of $R$ and \eqref{H_bound} give us,
\begin{eqnarray}\label{obj-value1}
F(\bx^{k+1})& \le & F(\bx) + r L (\bx^{k+1}-\by^k)^\top (\bx-\bx^{k+1}) + \frac{rL + \xi}{2} \| \bx^{k+1} -  \by^{k}\|^2 \nonumber \\ & & \hspace{1.0in} - \frac{\tau \mu - \xi}{2} \| \bx - \by^k \|^2 ,\, \forall \bx \in \XX.
\end{eqnarray}
Let $\theta\in (0,1)$, whose exact value is to be determined later. Take $\bx=\bx^k$ in \eqref{obj-value1} and multiply by $1-\theta$ on both sides of the expression. Then, let $\bx=\bx^*$ in \eqref{obj-value1} and multiply by $\theta$ on both sides. Finally, adding up these two resultant inequalities and applying the assumption $\tau \mu - \xi > 0$ we have,
\begin{eqnarray} \label{CP-b}
&F&(\bx^{k+1}) - F(\bx^*) \nonumber \\
&\le&  (1-\theta) \left( F(\bx^{k}) - F(\bx^*) \right) - \theta \left(\frac{\tau\mu - \xi}{2}\right)\|\bx^*-\by^k\|^2 \nonumber \\
&+& r L (\bx^{k+1}-\by^k)^\top \hspace{-0.05in} \left[(1-\theta) \bx^k + \theta \bx^* - \bx^{k+1} \right] + \left(\frac{rL + \xi}{2}\right) \|\bx^{k+1}-\by^k\|^2. \hspace{0.3in}
\end{eqnarray}

Recall from \eqref{z-def2} that,
\[
\bz^{k+1}=(1-\theta) \bz^k + \theta \by^k + \alpha (\bx^{k+1}-\by^k),
\]
where the value of the parameter $\alpha$ will be determined later. Thus,
\begin{eqnarray}
\|\bz^{k+1} - \bx^*\|^2 &\le & (1-\theta) \| \bz^k-\bx^* \|^2 + \theta \| \by^k - \bx^*\|^2 + \alpha^2 \|\bx^{k+1}-\by^k\|^2 \nonumber \\
&    & + 2 \alpha (\bx^{k+1}-\by^k)^\top ( (1-\theta)\bz^k+\theta \by^k - \bx^* ) . \label{CP-norm}
\end{eqnarray}
Take $C>0$, whose value will be determined in a moment. Multiplying by $C$ on both sides of \eqref{CP-norm}, and adding the resulting inequality to \eqref{CP-b} we obtain,
\begin{eqnarray*}
& & F(\bx^{k+1}) - F(\bx^*) + C \|\bz^{k+1} - \bx^*\|^2 \nonumber \\
&\le& (1-\theta) \left( F(\bx^{k}) - F(\bx^*) + C \|\bz^{k} - \bx^*\|^2 \right) \\
&+& \left(\theta C -\frac{(\tau\mu - \xi)\theta}{2}\right) \|\bx^*-\by^k\|^2 + \left( \frac{rL + \xi}{2} + \alpha^2 C \right) \|\bx^{k+1}-\by^k\|^2 \nonumber \\
&+& r L(\bx^{k+1}-\by^k)^\top \left[(1-\theta) \bx^k + \theta \bx^* - \bx^{k+1} + \frac{2\alpha C}{rL} \left((1-\theta)\bz^k+\theta \by^k - \bx^*\right) \right] \nonumber \\
&=& (1-\theta) \left( F(\bx^{k}) - F(\bx^*) + C \|\bz^{k} - \bx^*\|^2 \right) + r L \|\bx^{k+1}-\by^k\|^2 \nonumber \\
& & + r L(\bx^{k+1}-\by^k)^\top \left[(1-\theta) \bx^k  - \bx^{k+1} + \theta \left((1-\theta)\bz^k+\theta \by^k \right) \right] \nonumber \\
&=& (1-\theta) \left( F(\bx^{k}) - F(\bx^*) + C \|\bz^{k} - \bx^*\|^2 \right),
\end{eqnarray*}
if we choose the parameters to take the values
\begin{equation}\label{eqn:thetaANDalpha}
C:=\frac{\tau \mu - \xi}{2},\,\, \alpha:=\sqrt{\frac{r L - \xi}{\tau \mu - \xi}},\,\, \theta := \frac{\sqrt{(\tau\mu - \xi)(rL -\xi)}}{rL}.
\end{equation}
Observe $\theta \in (0,1)$. Since $\tau \leq r$, $\mu \leq L$ and $\tau \mu - \xi > 0$, it follows
\[0 < \tau \mu - \xi \leq  rL - \xi < rL.\]
Note in the last step above, we used \eqref{y-def2},
\[
\by^k = \frac{1}{1+\theta}\, \bx^k + \frac{\theta}{1+\theta} \, \bz^k.
\]
Therefore, $(1-\theta) \bx^k  + \theta \left((1-\theta)\bz^k+\theta \by^k \right) = \by^k$, and so
\[
r L(\bx^{k+1}- \by^k)^\top \left[(1-\theta) \bx^k  - \bx^{k+1} + \theta \left((1-\theta)\bz^k+\theta \by^k \right) \right] = - r L \| \bx^{k+1}-\by^k\|^2.
\]
Hence, we have for all $k\geq 0$,
\begin{multline}
F(\bx^{k+1}) - F(\bx^*) + \left(\frac{\tau \mu - \xi}{2}\right) \|\bz^{k+1} - \bx^*\|^2 \le \nonumber \\ \left(1- \frac{\sqrt{(\tau\mu - \xi)(rL -\xi)}}{rL} \right) \left( F(\bx^{k}) - F(\bx^*) +\left( \frac{\tau \mu - \xi}{2}\right) \|\bz^{k} - \bx^*\|^2 \right),
\end{multline}
and so by induction starting from the $k$-th iteration we obtain the result. \qed
\end{proof}

\begin{remark}\label{rem:GFISTA}
{\rm
If we have $\bB(\bx) = \bx$, then assumptions (A1) - (A3) hold trivially. Also, we have $\xi = 0$, $ r =1$ and $\tau = 1$ ensuring $\tau \mu - \xi > 0$ which gives us the convergence result,
\[
F(\bx^{k}) - F(\bx^*) \le \left( 1 -\sqrt{\frac{\mu}{L}}\right)^k \left( F(\bx^{0}) - F(\bx^*) + \frac{ \mu}{2}\|\bz^{0} - \bx^*\|^2 \right).
\]
If $\bz^0 = \bx^0$, then this is the same convergence result produced by GFISTA \cite{CC19,CP16} which is an accelerated variant of FISTA for \eqref{eqn:ACCO}. Therefore, {C-FISTA} generalizes GFISTA into the broader problem class given by the composite optimization model \eqref{eqn:genCCO}.
}
\end{remark}

\begin{remark}\label{rem:scaling}
{\rm
{The condition $\tau\mu-\xi>0$ essentially requires that the product of the first-order derivatives of $H$ and the second-order derivative of $\bB$ should not exceed the product of their curvatures. In some cases, this condition is easy to satisfy by variable-transformation, or scaling. For example, when $\bB(\bx)$ is homogeneous of degree $d_1>0$, i.e.\ $\bB(\lambda \bx)=\lambda^{d_1} \bB(\bx)$ for any $\bx$ and $\lambda >0$, and $R(\bx)$ is homogeneous with degree $d_2>0$, i.e.\ $R(\lambda \bx)=\lambda^{d_2} R(\bx)$. Then for any $\lambda>0$ one can scale the variable as $\bx':=\lambda^{\frac{1}{d_1}}\bx$, and the problem is turned equivalently into minimizing $H(\bB(\bx')) + R(\bx') = H(\lambda \bB(\bx)) + \lambda^{\frac{d_2}{d_1}} R(\bx)$. After the above change of variables, the new objective has the modified parameters $\tau:=\lambda^2 \tau$, $\mu:=\mu$, and $\xi:=\lambda \xi$. Therefore, in this situation by appropriately choosing $\lambda>0$ one can always satisfy the condition $\tau\mu-\xi>0$. }
}
\end{remark}

As mentioned previously in Section \ref{sec:assumptions}, the global nature of assumptions (A2) and (A3) are in many instances stronger than necessary. From the proof of Theorem \ref{thm:iter-complexity}, we see these assumptions only need to hold locally  about the $\bx^k$ and $\by^k$ sequences generated by C-FISTA. Enforcing the conditions to hold on the span of the constraint set ensures global linear convergence; however, even when the assumptions are not strictly satisfied on the span, C-FISTA still proves to be effective in practice.  In Section \ref{numerical results}, we will demonstrate the effectiveness of C-FISTA on group Lasso and geometric programming models where the assumptions are not strictly met globally but asymptotic linear convergence is still achieved. Ultimately, the success of C-FISTA in these regimes demonstrates the current gap between theory and practice; further research should shrink this separation and provide a direction for future inquiry. 

%============================================================================
\section{Motivation: Lasso, Logistic Regression \& Geometric Programming}\label{Lasso and log reg models}
%============================================================================

A class of motivating composite optimization models for which {C-FISTA} is applicable
%,
 %in the special case $B(x)=x$, 
 are the various Lasso formulations. The standard Lasso model originating from Tibshirani \cite{T96},
\[
\begin{array}{lll}
& \min & \frac{1}{2} \| \bA \bx - \bb \|^2 + \gamma \| \bx \|_1 \\
& \mbox{s.t.} & \bx \in \RR^n,
\end{array}
\]
is solvable via {C-FISTA}. All the assumptions for Algorithm \ref{alg:C FISTA alg} will be satisfied when $\bA$ is of full-column rank, but, as we will see in Section \ref{undet_GL}, C-FISTA demonstrates linear convergence when $\bA$ is not of full-rank and the assumptions are almost but not completely met. The results from Section \ref{C-FISTA} further guarantee global linear convergence to the optimal solution even when there exists a constraint on $\bx$. For example, Algorithm \ref{alg:C FISTA alg} can solve the following constrained Lasso model,
\[
\begin{array}{ll}
\min & \frac{1}{2} \| \bA \bx - \bb \|^2 + \gamma \| \bx \|_1 \\
\mbox{s.t.} & l_i \le x_i \le u_i,\,\, i=1,2,...,n.
\end{array}
\]
The utility of the Lasso model is detailed extensively in the literature; see \cite{JOV9,QSG13,YLY11,YL7,ZZST20}. The success of the basic model spawned the development of several Lasso variants including: the group Lasso (GL) model \cite{QSG13,YL7},
\[
\begin{array}{lll}
&\min & \frac{1}{2} \| \bA \bx - \bb \|^2 + \gamma \sum_{j\in \GG}\| \bx(j) \| \\
& \mbox{s.t.} & \bx \in \RR^n,
\end{array}
\]
where $\bx(j)$ is a subvector of $\bx$ corresponding to the indices in $j \in \GG$ where $\GG$ disjointly partitions $\{1,2,\hdots, n\}$, the sparse-group Lasso (SGL) model \cite{SFHT13,ZZST20},
\[
\begin{array}{lll}
&\min & \frac{1}{2} \| \bA \bx - \bb \|^2 + \gamma_1 \sum_{j\in \GG}\| \bx(j) \| + \gamma_2 \|\bx\|_1 \\
& \mbox{s.t.} & \bx \in \RR^n,
\end{array}
\]
and the overlapping group and overlapping sparse-group Lasso (OSGL) models \cite{JOV9,YLY11,ZZST20} which are of identical form to the group Lasso and sparse-group Lasso models respectively, expect $\GG$ no longer is a disjoint partition of $\{1,\hdots, n\}$. As we will see in Section~\ref{duality-subproblems}, the subproblems required in {C-FISTA} for the Lasso models are solvable in closed form or by an efficient subroutine in the case of the overlapping group/sparse-group Lasso models with global linear convergence achieved in each variant.

Another model where {C-FISTA} has demonstrated linear convergence is the sparse-group logistic regression (SGLR) model \cite{SLEP,MGB8},
\[
\begin{array}{lll}
&\min &\frac{1}{m} \sum_{i=1}^{m} \ln\left( 1 +\exp\left( -y_i \left( \ba_i^\top \bx + b \right) \right) \right) + \gamma_1 \sum_{j \in \GG} \| \bx(j) \| + \gamma_2 \| \bx\|_1 \\
& \mbox{s.t.} & \bx \in \RR^n, b \in \RR,
\end{array}
\]
where $\by \in \RR^n$ is a vector containing only entries of $\pm 1$. The sparse-group logistic regression model has applications in various machine learning models especially in the area of classification, and the subproblems for the logistic regression model are in the same format as the sparse-group Lasso model detailed extensively in Section \ref{duality-subproblems}. 

{
A final motivating model is geometric programming. Geometric programming is a useful modeling paradigm which has numerous applications especially in electrical circuit design. A full description and tutorial on these models is provided by Boyd {\it et al.} in \cite{GP}. A regularized subclass of geometric programs solvable via C-FISTA is,
\[
\begin{array}{lll}
&\min & \;\; \sum_{k=1}^{K} c_k x_1^{a_{k1}} x_2^{a_{k2}}\hdots x_n^{a_{kn}} + R(\bx), \\
&\mbox{s.t}&\;\; \bx \in \XX, \nonumber 
\end{array}
\]
where $c_k > 0$, $\mathbf{a_{k}} := (a_{k1}, \hdots, a_{kn})^\top \leq  0$ for all $k$, and $\XX$ is a closed and bounded convex subset of $\RR^n_{++}$. The condition $\mathbf{a_k} \leq 0$ ensures the model is convex. In solving this model, we decompose the objective such that $\bB(\bx) = \left( \ln(x_1), \hdots, \ln(x_n) \right)^\top$. Allowing non-linear and non-convex component functions for $\bB$ demonstrates the novelty of C-FISTA to escape strictly affine compositions showcasing a level of extended generality. 
}

Before discussing how to compute subproblems for the Lasso models, we first discuss Fenchel duality in Section \ref{Fenchel Duality} and describe how {C-FISTA} can be utilized via a dual approach to solve a general constrained composite model.

%============================================================================
\section{A Dual Formulation and Algorithmic Approach}\label{Fenchel Duality}
%============================================================================

In this section, we outline the construction of a dual algorithm using Fenchel duality which generates an approximate primal solution from the dual solution at every iterate. To elucidate this approach we first provide a brief overview of standard convex analysis results regarding conjugate functions and Fenchel duality \cite{RF70,Roos20}. Unless otherwise stated, we use the standard notation of Rockafellar \cite{RF70}. For generality, we will consider the following constrained composite model:
\begin{eqnarray}\label{eqn:genFM}
     & \min &\;\; h_0(\bx)+h_1(\bx)+\cdots+h_\ell(\bx) \\
     & \mbox{s.t.} &\;\; \bx \in \CC_j \subseteq \RR^n,\, j=1,2,...,r, \nonumber
\end{eqnarray}
where the $h_i$'s are proper convex functions on $\RR^n$ and the $\CC_j$'s are closed convex subsets of $\RR^n$. To begin, let us introduce some basic notions in convex analysis.

 Let $S\subseteq \RR^n$ be a closed convex set. The {\it support function} of $S$ is defined as,
\[
f_S(\by) : = \max_{\bx \in S} \by^\top \bx,
\]
and the {\it polar set} of $S$ is,
\[
S^\circ:=\{ \by \mid \by^\top \bx \le 1,\, \forall \bx \in S\}.
\]
Thus, we can succinctly write, $S^\circ = \{ \by \mid f_S(\by) \le 1\}$. Let $g$ be a convex function whose domain is contained in $S$. The {\it conjugate of $g$} is defined as,
\[
g^*(\by) := \sup_{\bx \in S} \left[ \by^\top \bx - g(\bx) \right],
\]
where the function is assumed to take value $+\infty$ anywhere outside its domain. In deriving the Fenchel dual of \eqref{eqn:genFM} we need some standard results from convex analysis.

\begin{lemma} \label{lemma1}
Suppose $f$ is a proper convex function, then
\begin{equation}
-f^*(\mathbf{0}) = \inf_{\bx} \; f(\bx).
\end{equation}
\end{lemma}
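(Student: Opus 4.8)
The plan is to prove the identity directly from the definition of the conjugate given above, since it amounts to nothing more than evaluating $f^*$ at the origin. Recall that $f^*(y) = \sup_x \left[ y^\top x - f(x) \right]$. First I would set $y = 0$, which annihilates the linear term and leaves $f^*(0) = \sup_x \left[ -f(x) \right]$. Then I would invoke the elementary fact that $\sup_x \left[ -f(x) \right] = -\inf_x f(x)$, and negate both sides to obtain $-f^*(0) = \inf_x f(x)$, exactly as claimed. The whole argument is a single substitution followed by a sign flip.

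The only point that deserves attention — and really the only substantive content — is the role of the properness hypothesis, which is what keeps every quantity a well-defined extended real. Since $f$ is proper, it is finite at some point and never takes the value $-\infty$; consequently $\inf_x f(x) < +\infty$ and correspondingly $f^*(0) = \sup_x[-f(x)] > -\infty$, so we never run into an indeterminate $(+\infty)-(+\infty)$ situation. Under this assumption the equality $\sup_x[-f(x)] = -\inf_x f(x)$ holds as a genuine identity of extended reals, including the degenerate case where $\inf_x f(x) = -\infty$ (in which $f^*(0) = +\infty$ and both sides of the lemma read $-\infty$). I anticipate no real obstacle here: the proof is essentially complete the moment $y=0$ is plugged into the definition, and the remaining manipulation is purely formal.
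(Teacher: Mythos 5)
Your proof is correct and matches the paper's approach: the paper itself notes that Lemma \ref{lemma1} follows directly from the definition of the conjugate, which is precisely your substitution $y=0$ followed by the identity $\sup_x[-f(x)] = -\inf_x f(x)$. Your extra remark on properness ruling out degenerate extended-real arithmetic is a sound (if implicit in the paper) refinement.
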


\begin{lemma} \label{lemma2}
(Theorem 16.4 \cite{RF70}) Suppose $g_1,...,g_r$ are proper convex functions on $\RR^n$ and the intersection of the relative interiors of the domains of the $g_i$'s is non-empty, then
\begin{equation} \label{infimal}
\left( g_1 + \cdots + g_r \right)^*(\bx) = \inf_{\by_1+\cdots+\by_r=x } \sum_{j=1}^r g_j^*(\by_j).
\end{equation}
\end{lemma}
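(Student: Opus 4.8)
The plan is to establish the two inequalities in \eqref{infimal} separately, since the ``$\le$'' direction is elementary and holds for every $x$ with no constraint qualification, whereas the ``$\ge$'' direction \emph{together with the attainment of the infimum} is the real content and is exactly where the relative-interior hypothesis is used. It suffices to treat the case $r=2$ and then induct on $r$; for the induction step one invokes the auxiliary fact that, under the stated hypothesis, $\mathrm{ri}\,\mathrm{dom}(g_1+\cdots+g_{r-1})=\bigcap_{j<r}\mathrm{ri}\,\mathrm{dom}\,g_j$, so that the qualification is inherited by the partial sum $g_1+\cdots+g_{r-1}$ paired with $g_r$.

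For the easy inequality, fix any $x$ and any splitting $x=y_1+\cdots+y_r$; since the supremum of a sum never exceeds the sum of the suprema,
\[
\left(g_1+\cdots+g_r\right)^*(x)=\sup_z\sum_{j=1}^r\left(y_j^\top z-g_j(z)\right)\le\sum_{j=1}^r\sup_z\left(y_j^\top z-g_j(z)\right)=\sum_{j=1}^r g_j^*(y_j).
\]
Taking the infimum over all such splittings gives $\left(g_1+\cdots+g_r\right)^*(x)\le\inf_{y_1+\cdots+y_r=x}\sum_j g_j^*(y_j)$, with no qualification needed.

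For the reverse inequality I would argue dually in the case $r=2$. Assuming (as we may, after replacing each $g_j$ by its lower semicontinuous hull, which changes neither side of \eqref{infimal} under the qualification) that $g_1,g_2$ are closed, set $\phi(x):=\inf_{y_1+y_2=x}\left(g_1^*(y_1)+g_2^*(y_2)\right)$. The conjugate of an infimal convolution is always the sum of the conjugates, so $\phi^*=g_1^{**}+g_2^{**}=g_1+g_2$ by Fenchel--Moreau; hence $(g_1+g_2)^*=\phi^{**}=\mathrm{cl}\,\phi$. Since $\phi$ is convex, $\mathrm{cl}\,\phi\le\phi$ always, which merely reproduces the easy inequality. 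The entire theorem therefore reduces to the claim that, under the relative-interior condition, $\phi$ is already lower semicontinuous and the infimum defining it is attained at each point, so that $\mathrm{cl}\,\phi=\phi$.

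The main obstacle is precisely this exactness-and-closedness step, and I would settle it by separation. Fix $x$ with $(g_1+g_2)^*(x)=\alpha<\infty$ (if $\alpha=+\infty$ the easy inequality already forces equality), so that $g_1(z)-x^\top z\ge-\alpha-g_2(z)$ for all $z$. Then the convex set $\mathrm{epi}\,(g_1-x^\top\cdot)$ lies weakly above the convex region $\{(z,t):t\le-\alpha-g_2(z)\}$, and the common point $z_0\in\mathrm{ri}\,\mathrm{dom}\,g_1\cap\mathrm{ri}\,\mathrm{dom}\,g_2$ supplied by the hypothesis is what forces any separating hyperplane to be \emph{non-vertical} (a vertical separator would separate the two domains in $z$-space, impossible since their relative interiors meet at $z_0$). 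Non-verticality is the crux, and it is exactly where an argument omitting the qualification would break down. Writing the separator as $s=w^\top z+c$, the sandwiching inequalities $g_1(z)-x^\top z\ge w^\top z+c\ge-\alpha-g_2(z)$ yield $g_1^*(x+w)\le-c$ and $g_2^*(-w)\le\alpha+c$, whence the splitting $x=(x+w)+(-w)$ satisfies $g_1^*(x+w)+g_2^*(-w)\le\alpha$. This gives $\phi(x)\le\alpha$ and, with the easy inequality, both equality and attainment; the induction on $r$ then completes the proof.
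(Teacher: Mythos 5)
Your proof is correct, but it cannot coincide with the paper's, because the paper offers no proof of this lemma at all: it is quoted verbatim as Theorem 16.4 of Rockafellar's \emph{Convex Analysis}, and the authors explicitly remark only that Lemmas \ref{lemma1} and \ref{lemma3} follow from definitions. So the right comparison is with Rockafellar's own argument, and there your route genuinely differs. Rockafellar derives Theorem 16.4 from his calculus for image functions: writing $g_1+\cdots+g_r=(g_1\oplus\cdots\oplus g_r)\circ A$ with $A:z\mapsto(z,\dots,z)$ the diagonal map, he invokes his Theorem 16.3 on conjugates of compositions with linear maps, under the same relative-interior qualification, to get $(g_1+\cdots+g_r)^*(x)=\inf\{\sum_j g_j^*(y_j):A^*y=\sum_j y_j=x\}$ with attainment. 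You instead prove the $r=2$ case directly by a separation argument in $\RR^{n+1}$ between $\mathrm{epi}\,(g_1-x^\top\cdot)$ and the hypograph of $-\alpha-g_2$, which is in essence a self-contained proof of Fenchel's duality theorem specialized to this situation, and then you induct using $\mathrm{ri}\,\mathrm{dom}(g_1+\cdots+g_{r-1})=\bigcap_{j<r}\mathrm{ri}\,\mathrm{dom}\,g_j$ (valid by the relative-interior intersection rule, since the full intersection is nonempty). Both arguments turn on the identical geometric point --- the qualification excludes vertical separating hyperplanes --- but yours is more elementary and self-contained, while Rockafellar's is shorter given his earlier machinery; your version, like his, also delivers attainment of the infimum, which the paper's transcription of the statement does not even record. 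Your easy inequality, the conjugacy identities in the motivational paragraph (including the reduction to closed $g_j$, which in fact your separation argument never needs), and the extraction of $g_1^*(x+w)\le -c$, $g_2^*(-w)\le\alpha+c$ from the sandwiched affine function are all correct.

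One point deserves sharpening: you assert that a vertical separator is ``impossible since their relative interiors meet at $z_0$,'' but separation of the two domains is not outright impossible when their relative interiors meet --- improper separation can occur if both domains lie in a common hyperplane. The correct statement is that $\mathrm{ri}\,\mathrm{epi}\,(g_1-x^\top\cdot)$ and the relative interior of the hypograph are disjoint (since one set lies weakly above the other), so a \emph{proper} separation exists; if that separator were vertical, minimizing the linear functional over each domain at the relative-interior point $z_0$ would force both domains, hence both the epigraph and the hypograph, entirely into the hyperplane, contradicting properness. This is a one-sentence repair, not a structural flaw, but as written the non-verticality step --- which you rightly identify as the crux --- is slightly under-justified.
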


\begin{lemma} \label{lemma3}
For a convex set $S$, suppose its indicator function is defined as,
\[
I_{S}(x) := \left\{ \begin{array}{ll}
0, & \mbox{if $\bx\in S$}; \\
+\infty, & \mbox{if $\bx\not \in S$}.
\end{array} \right.
\]
Then, the conjugate of the indicator function is simply the support function of $S$;
\[
I_{S}^* = f_S.
\]
\end{lemma}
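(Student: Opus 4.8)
The plan is to prove the identity by directly unwinding the two definitions involved and evaluating the relevant supremum. By definition the conjugate of the indicator function is $I_S^*(y) = \sup_{x \in \RR^n}\left[ y^\top x - I_S(x)\right]$, and the goal is to show this equals the support function $f_S(y) = \max_{x \in S} y^\top x$ for every $y \in \RR^n$. Since the claimed equality $I_S^* = f_S$ is an equality of functions, I would establish it pointwise.

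First I would split the supremum according to whether $x$ lies in $S$. For $x \in S$ the indicator vanishes, so the bracketed expression reduces to $y^\top x$; for $x \notin S$ the indicator equals $+\infty$, so the bracketed expression equals $-\infty$ and such points cannot contribute to the supremum whenever $S$ is nonempty. Consequently the supremum is effectively taken only over $S$, yielding $I_S^*(y) = \sup_{x \in S} y^\top x$, which is precisely $f_S(y)$. Equivalently, one may invoke the conjugate directly in the localized form $\sup_{x \in S}\left[ y^\top x - I_S(x)\right]$, on which $I_S \equiv 0$, collapsing the computation to a single line.

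Since both sides are extended-real-valued, I would note that the identity is intended as an equality on all of $\RR^n$, permitting the common value $+\infty$ when $S$ is unbounded in a direction along which $y^\top x$ is unbounded above, and I would dispose of the degenerate case $S = \emptyset$ separately. The only point meriting a word of care is the convention of extended arithmetic ensuring the $x \notin S$ branch never raises the supremum; this is immediate. There is no genuine obstacle here, as the entire content resides in the definitions, so the proof amounts to the case split above together with this remark.
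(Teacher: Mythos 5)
Your proof is correct and matches the paper's treatment: the paper simply notes that this lemma ``follows directly from the definitions of conjugate and support functions,'' and your direct unwinding of the supremum, with the observation that points $x \notin S$ contribute $-\infty$ and so the supremum collapses to $\sup_{x \in S} y^\top x = f_S(y)$, is exactly that argument. Your extra care about extended arithmetic and the empty-set case is fine, if more than the paper bothers to record.
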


The proofs of Lemmas \ref{lemma1} and \ref{lemma3} follow directly from the definitions of conjugate and support functions. Thus, rewriting \eqref{eqn:genFM} in the equivalent form,
\begin{eqnarray}
     & \min & h_0(\bx)+h_1(\bx)+\cdots+h_\ell(\bx) + I_{\CC_1}(\bx) + \hdots + I_{\CC_r}(\bx) \nonumber  \\
     & \mbox{s.t.} & \bx \in \RR^n, \nonumber
\end{eqnarray}
and utilizing Lemmas \ref{lemma1}, \ref{lemma2}, and \ref{lemma3}, the Fenchel dual of \eqref{eqn:genFM} is:
\begin{eqnarray}\label{eqn:dualFM}
           & \min & h_0^*(-\by_1-\cdots - \by_\ell- \by_{\ell+1} - \cdots - \by_{\ell+r}) \\
           &      &  +h_1^*(\by_{1}) + \cdots + h^*_\ell(\by_{\ell}) + f_{\CC_1}(\by_{\ell+1}) +\cdots + f_{\CC_r}(\by_{\ell+r}) \nonumber  \\
& \mbox{s.t.} & \by_j \in \RR^n, \, j=1,2,...,\ell+r. \nonumber
\end{eqnarray}
Note, for the sake of presentation we have written the dual problem \eqref{eqn:dualFM} in such a manner that the optimal solutions of \eqref{eqn:genFM} and \eqref{eqn:dualFM} differ by a negative sign. To better understand the link between the primal and dual models, we demonstrate how the optimality conditions of \eqref{eqn:genFM} and \eqref{eqn:dualFM} are linked. Assuming the conjugate functions of the $h_i$'s ($i=0,1,...,\ell$) are differentiable  and denoting $\by^*_j$ for $j=1,\hdots, \ell +r$ to be optimal for \eqref{eqn:dualFM}, we have the first-order optimality conditions,
\begin{equation} \label{dual-optimality}
\left\{
\begin{array}{ll}
-\nabla h_0^*(-\sum_{j=1}^{\ell+r} \by_j^*) + \nabla h_i^*(\by^*_i) = 0, & i=1,...,\ell \\
\nabla h_0^*(-\sum_{j=1}^{\ell+r} \by_j^*) \in \partial f_{\CC_j} (\by^*_{\ell+j}) , & j=1,...,r.
\end{array}
\right.
\end{equation}

Denoting $\bx^*:= \nabla h_0^*(-\sum_{j=1}^{\ell+r} \by_j^*)$, the optimality conditions imply $\bx^*=\nabla h_i^*(\by^*_i) $ for $i=1,\hdots,\ell$, and so by Lemma 5 of \cite{Z18}, $\nabla h_i(\bx^*)= \by^*_i$, for $i=1,...,\ell$. Similarly, because $f_{\CC_j}^* = I_{\CC_j}$,
if $ \bx^* \in \partial f_{\CC_j} (\by^*_{\ell+j})$, then $\by^*_{\ell+j} \in \partial I_{\CC_j}(\bx^*)$, which means that $\by^*_{\ell+j}$ is a normal direction at $\bx^*$:
\[
(\by^*_{\ell+j})^\top (\bx-\bx^*) \le 0, \mbox{ for all $\bx\in \CC_j$}.
\]
Furthermore, by \eqref{dual-optimality}
\begin{equation} \label{dual-opt2}
\nabla h_0(\bx^*) = -\sum_{j=1}^{\ell+r} \by_j^* = -\sum_{i=1}^\ell \nabla h_i(\bx^*) - \sum_{j=1}^r \by^*_{\ell+j}.
\end{equation}
Thus, the first-order optimality conditions for the dual problem implies \eqref{dual-opt2}, which is the optimality condition for the primal problem \eqref{eqn:genFM}. To see this, note that
\[
\nabla h_0(\bx^*) + \sum_{i=1}^\ell \nabla h_i(\bx^*) = - \sum_{j=1}^r \by^*_{\ell+j},
\]
and so,
\[
\left( \nabla h_0(\bx^*) + \sum_{i=1}^\ell \nabla h_i(\bx^*) \right)^\top \hspace{-0.1in} ( \bx - \bx^*) = - \sum_{j=1}^r \left(\by^*_{\ell+j} \right)^\top \hspace{-0.05in}( \bx - \bx^*) \ge 0,\,\, \forall \bx \in \cap_{j=1}^r \CC_j.
\]

We now present an important relationship between the primal solution induced by the dual iterations which outlines a dual approach to solving \eqref{eqn:genFM}.
\begin{proposition}\label{dual_prop}
Suppose that $h_0$ is a strongly convex function with strong convexity parameter $\mu$ and gradient Lipschitz constant $L$. {Let $\{\by^k\}$ be a  sequence converging to the dual solution $\by^*$. For each $\by^k$ in the dual sequence, we recover a primal solution
$\bx^k:= \nabla h_0^*(-\sum_{j=1}^{\ell+r} \by_j^k)$ such that,}
\[
\frac{1}{L} \left\| \sum_{j=1}^{\ell+r} \left( \by^k_j - \by^*_j \right) \right\| \le \| \bx^k - \bx^*\| \le \frac{1}{\mu} \left\| \sum_{j=1}^{\ell+r} \left( \by^k_j - \by^*_j \right) \right\|.
\]
\end{proposition}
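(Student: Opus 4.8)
The plan is to reduce the two-sided estimate to the standard fact that conjugation interchanges strong convexity and Lipschitz smoothness of the gradient, and then to read off the bounds from the monotonicity and co-coercivity inequalities satisfied by $\nabla h_0$. To set up notation, I would write $u^k := -\sum_{j=1}^{\ell+r} y_j^k$ and $u^* := -\sum_{j=1}^{\ell+r} y_j^*$, so that by definition $x^k = \nabla h_0^*(u^k)$ and, by the optimality relation established just before the proposition, $x^* = \nabla h_0^*(u^*)$. The quantity inside the norms on both sides of the claim is exactly $\|u^k - u^*\|$, so the statement reduces to the pair of inequalities $\tfrac{1}{L}\|u^k-u^*\| \le \|\nabla h_0^*(u^k)-\nabla h_0^*(u^*)\| \le \tfrac{1}{\mu}\|u^k-u^*\|$.

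The crucial structural fact I would invoke is that for a differentiable $\mu$-strongly convex $h_0$ the gradient map $\nabla h_0$ is a bijection with inverse $\nabla h_0^*$; equivalently, $x = \nabla h_0^*(u)$ if and only if $u = \nabla h_0(x)$. This is precisely the correspondence already used in deriving the optimality conditions \eqref{dual-optimality}. Consequently $\nabla h_0(x^k) = u^k$ and $\nabla h_0(x^*) = u^*$, and the inner product $(u^k - u^*)^\top (x^k - x^*)$ can be rewritten as $(\nabla h_0(x^k) - \nabla h_0(x^*))^\top (x^k - x^*)$, to which the standard first-order inequalities for $h_0$ apply.

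For the upper bound I would use the monotonicity estimate coming from $\mu$-strong convexity, namely $(\nabla h_0(x^k) - \nabla h_0(x^*))^\top (x^k - x^*) \ge \mu\|x^k - x^*\|^2$. Combined with the identification above and the Cauchy--Schwarz inequality $(u^k - u^*)^\top (x^k - x^*) \le \|u^k - u^*\|\,\|x^k - x^*\|$, this yields $\mu\|x^k - x^*\|^2 \le \|u^k - u^*\|\,\|x^k - x^*\|$, hence $\|x^k - x^*\| \le \tfrac{1}{\mu}\|u^k - u^*\|$. For the lower bound I would instead invoke the co-coercivity (Baillon--Haddad) inequality, valid because $h_0$ is convex with $L$-Lipschitz gradient: $(\nabla h_0(x^k) - \nabla h_0(x^*))^\top (x^k - x^*) \ge \tfrac{1}{L}\|\nabla h_0(x^k) - \nabla h_0(x^*)\|^2 = \tfrac{1}{L}\|u^k - u^*\|^2$. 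Applying Cauchy--Schwarz to the left-hand side gives $\|u^k - u^*\|\,\|x^k - x^*\| \ge \tfrac{1}{L}\|u^k - u^*\|^2$, which after dividing by $\|u^k - u^*\|$ produces $\|x^k - x^*\| \ge \tfrac{1}{L}\|u^k - u^*\|$, establishing both inequalities (the degenerate case $u^k = u^*$ being trivial).

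The step I expect to require the most care is the lower bound, since it rests on the co-coercivity of the gradient of an $L$-smooth convex function rather than on the more elementary strong-convexity estimate; I would cite the Baillon--Haddad theorem, or equivalently the fact that $h_0^*$ is $\tfrac{1}{L}$-strongly convex, for this. The bijectivity of $\nabla h_0$ and the identity $\nabla h_0^* = (\nabla h_0)^{-1}$ should be flagged explicitly but follow directly from strong convexity and the conjugacy framework already in use in the paper.
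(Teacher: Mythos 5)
Your proof is correct, and it establishes the same two inequalities as the paper but through different key lemmas. Both arguments start from the identical reduction: writing $u^k := -\sum_{j=1}^{\ell+r} y_j^k$ and $u^* := -\sum_{j=1}^{\ell+r} y_j^*$, using $x^k = \nabla h_0^*(u^k)$, $x^* = \nabla h_0^*(u^*)$, and the inverse relation $\nabla h_0(x^k) = u^k$, $\nabla h_0(x^*) = u^*$. From there the paper is more direct on both halves: the lower bound is just the $L$-Lipschitz continuity of $\nabla h_0$ applied as $\bigl\| u^k - u^* \bigr\| = \| \nabla h_0(x^k) - \nabla h_0(x^*) \| \le L \| x^k - x^* \|$, and the upper bound is the $\tfrac{1}{\mu}$-Lipschitz continuity of $\nabla h_0^*$, obtained by citing the conjugate-duality theorem (Theorem 1 of \cite{Z18}: $h_0^*$ is $\tfrac{1}{L}$-strongly convex with $\tfrac{1}{\mu}$-Lipschitz gradient). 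You instead stay entirely on the primal side: your upper bound via the strong-monotonicity inequality $(\nabla h_0(x^k) - \nabla h_0(x^*))^\top (x^k - x^*) \ge \mu \| x^k - x^* \|^2$ plus Cauchy--Schwarz is a self-contained substitute for the conjugacy theorem, which is a genuine economy in citations. Your lower bound, by contrast, is heavier than needed: you invoke Baillon--Haddad co-coercivity plus Cauchy--Schwarz where the plain Lipschitz bound on $\nabla h_0$ gives the result in one line --- so, contrary to the concern you flag at the end, the lower bound is actually the easy half, and the co-coercivity machinery (while valid) can be dropped. Net comparison: your route needs nothing about $h_0^*$ beyond the bare identity $\nabla h_0^* = (\nabla h_0)^{-1}$, at the cost of a slightly roundabout lower bound; the paper's route is shorter but leans on the cited duality theorem for the smoothness of the conjugate.
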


\begin{proof}
By the Fenchel duality relation, we know that $h_0^*$ is also a strongly convex function with strong convexity parameter $1/L$ and gradient Lipschitz constant $1/\mu$ (Theorem 1 \cite{Z18}). {Further, by the definition of $\bx^k$ and previously stated results,}
$ \nabla h_0(\bx^k)= -\sum_{j=1}^{\ell+r} \by_j^k$. Therefore, $ \nabla h_0(\bx^k) - \nabla h_0(\bx^*) = \sum_{j=1}^{\ell+r} ( \by_j^*- \by_j^k)$, and so the gradient Lipschitz condition on $h$ yields,
\[
\left\| \sum_{j=1}^{\ell+r} \left( \by^k_j - \by^*_j \right) \right\| = \| \nabla h_0(\bx^k) - \nabla h_0(\bx^*) \| \le L \|\bx^k - \bx^*\|.
\]
On the other hand, the gradient Lipschitz condition on $h^*$ states,
\[
\| \bx^k- \bx^*\|= \left\|\nabla h_0^*\left(-\sum_{j=1}^{\ell+r} \by_j^k\right) - \nabla h_0^*\left(-\sum_{j=1}^{\ell+r} \by_j^*\right)\right\| \le \frac{1}{\mu} \left\| \sum_{j=1}^{\ell+r} \left( \by^k_j - \by^*_j \right) \right\|. 
\]
\qed
\end{proof}
Therefore, the above result gives us the framework to develop a general dual approach to solving \eqref{eqn:genFM}. Applying any algorithm to solve the dual problem \eqref{eqn:dualFM} we can recover a primal solution at each iteration; furthermore, the rate of convergence on the primal side for any algorithm will only differ by a fixed constant, $1/\mu$, in-comparison to the rate of convergence on the dual side.

The presented dual formulation is not novel; Han and Lou \cite{HL88} and Fukushima {\it et al.}~\cite{FHN96} have very similar derivations of the dual and many algorithms such as Dykstra's projection algorithm \cite{GM89} utilize Fenchel duality. The clarity provided by the previous result is that, under the assumption $h_0$ is strongly convex with gradient Lipschitz, any algorithm which generates a converging dual solution will generate a primal solution with the same rate of convergence up to a constant factor. {This is directly relevant to our proposed algorithm because the dual \eqref{eqn:dualFM} is solvable via C-FISTA. Let $H: \RR^n \rightarrow \RR,\; \bB: \RR^{n(\ell +r)} \rightarrow \RR^n$, and $R: \RR^{n(\ell + r)} \rightarrow \RR$ such that,
\begin{align}
H(\bx) &= h_0^*(\bx), \nonumber \\
\bB(\bx) &= \left[ -\mathbf{I}_n \;| \; \hdots \; | \; - \mathbf{I}_n\right]\bx, \nonumber \\
R(\bx) &= h_1^*(\mathbf{E}_1 \bx) + \cdots + h^*_\ell(\mathbf{E}_{\ell}\bx) + f_{\CC_1}(\mathbf{E}_{\ell+1}\bx) +\cdots + f_{\CC_r}(\mathbf{E}_{\ell+r}\bx), \nonumber 
\end{align}
%\]
%and,
%\[
%R(x) = h_1^*(E_1 x) + \cdots + h^*_\ell(E_{\ell}x) + f_{\CC_1}(E_{\ell+1}x) +\cdots + f_{\CC_r}(E_{\ell+r}x),
%\end{algin}
%\[
%H(x) = h_0^*(x), \;\;\; B(x) = \left[ -I_n \;| \; \hdots \; | \; - I_n\right]x, 
%\]
%and,
%\[
%R(x) = h_1^*(E_1 x) + \cdots + h^*_\ell(E_{\ell}x) + f_{\CC_1}(E_{\ell+1}x) +\cdots + f_{\CC_r}(E_{\ell+r}x),
%\]
where $\mathbf{I}_n$ is the $n\times n$ identity matrix and $\mathbf{E}_k$ is the $1 \times (\ell + r)$ block matrix with the identity matrix $\mathbf{I}_n$ in the k-th position, e.g. $\mathbf{E}_2 = \left[ \mathbf{0} \; | \; \mathbf{I}_n \; | \; \mathbf{0} \; | \; \hdots \; | \; \mathbf{0} \right] \in \RR^{n\times n(\ell+r)}$, then we can rewrite \eqref{eqn:dualFM} as,
\[
\begin{array}{lll}
& \min & H(\bB(\bx)) + R(\bx) \\
& \mbox{s.t.} & \bx \in  \RR^{n(\ell +r)}.
\end{array}
\]
With this decomposition it is straightforward to specify the constants: $\mu'$, $L'$, $\tau'$, $r'$ and $\xi'$ for C-FISTA. Assuming $h_0$ is strongly convex with parameter $\mu$ and gradient Lipschitz with constant $L$, it follows $\mu' = 1/L$ and $L' = 1/\mu$. The linearity of $\bB$ yields $\xi' = 0$, and the simple structure of the mapping gives $r' = \|\left[ -\mathbf{I}_n \;| \; \hdots \; | \; - \mathbf{I}_n\right]\|^2_2 = (\ell + r)$. Lastly, $\tau' = 0$ since the matrix defining $\bB$ is singular; however, though $\tau' = 0$ does not strictly satisfy assumption (A2), in practice C-FISTA will often converge for a range of small $\tau'$ values. This is demonstrated in Section \ref{undet_GL} through multiple numerical experiments on an underdetermined group Lasso model and follows from our previous discussion on the overly strict nature of the assumptions which ensure global linear convergence. 

A final note must be made in-regards to the proximal mapping step which would be required to solve the dual formulation. Computing $\mbox{\rm Prox}_{ R}(\cdot)$ in this case would depend on the definitions of $h_i^*$ and $f_{C_i}$. This could prove difficult; however, $R$ presents a natural decomposition such that only the individual proximal mappings of the functions $h_i^*$ and $f_{C_i}$ would be necessary. This substantially simplifies the procedure enabling tractable computations in many instances. 

Therefore, we see that \eqref{eqn:genFM} under the proper convexity assumptions has a dual which in many cases is solvable via C-FISTA, and Proposition \ref{dual_prop} demonstrates the primal solution is recoverable from the dual solution without losing linear convergence.}

%
%
% Connecting this result back to {C-FISTA} in the special case $B(x) = x$, we can solve the dual problem \eqref{eqn:dualFM} and obtain a primal solution from Proposition \ref{dual_prop}. Therefore, any primal problem \eqref{eqn:genFM} can be solved with {C-FISTA} assuming the required proximal mappings are computable and the necessary assumptions for Theorem \ref{thm:iter-complexity} are maintained.
Besides using Fenchel duality {to directly solve the dual model with C-FISTA and recover the primal solution, we can also utilize Fenchel duality theory to solve the intermediate subproblems in the primal implementation of C-FISTA.} In the next section, we describe this process using the sparse-group Lasso model as an example.

%============================================================================
\section{Solving Subproblems with Fenchel Duality} \label{duality-subproblems}
%============================================================================
One example of how duality can be leveraged to apply {C-FISTA} can be seen in how we solve the subproblems for the group and sparse-group Lasso models. In the sparse-group Lasso model, the main subproblem which needs to be solved to apply {C-FISTA} is of the form,
\[
\begin{array}{lll}
(S) & \min & \frac{1}{2} \| \bx - \bd \|^2 + \gamma_1 \| \bx \| + \gamma_2 \| \bx \|_1 \\
& \mbox{s.t.} & \bx \in \RR^n,
\end{array}
\]
with $\gamma_1, \gamma_2 \geq 0$. Utilizing Lemmas \ref{lemma1} and \ref{lemma2} and standard conjugate functions \cite{Roos20}, we see the Fenchel dual of $(S)$ is,
\[
\begin{array}{lll}
(DS)  & \min & \frac{1}{2} \| \bd + \by_1 + \by_2 \|^2  \\
& \mbox{s.t.} & \|\by_1\| \le \gamma_1, \\
&             & \|\by_2\|_\infty \le \gamma_2 .
\end{array}
\]
When $\by_2$ is fixed, consider the simple projection problem,
\[\min_{\|\by_1\|\le \gamma_1} \,\, \| \bd+\by_2 + \by_1\|,
\]
whose solution is explicit:
\[
\by_1^* = \left\{ \begin{array}{ll} -\bd-\by_2, & \mbox{ if $\|\bd+\by_2\| \le \gamma_1$}; \\ - \gamma_1 (\bd+\by_2) / \|\bd+\by_2\|, & \mbox{ if $\|\bd+\by_2\| > \gamma_1$}. \end{array} \right.
\]
Therefore,
\[
\|\bd+\by_2+\by_1^*\| = \left\{ \begin{array}{ll} 0, & \mbox{ if $\|\bd+\by_2\| \le \gamma_1$}; \\ \|\bd+\by_2\| - \gamma_1 , & \mbox{ if $\|\bd+\by_2\| > \gamma_1$}. \end{array} \right.
\]
Thus, the dual problem $(DS)$ is equivalent to $\min_{\|\by_2\|_\infty \le \gamma_2} \,\, (\|\bd+\by_2\| - \gamma_1)_+ $, which can be solved through the associated model,
\[
\begin{array}{lll}
(DS)' & \min & \frac{1}{2} \| \bd + \by_2 \|^2  \\
& \mbox{s.t.} & \|\by_2\|_\infty \le \gamma_2 .
\end{array}
\]
Since the individual components of $\by_2$ are decoupled in $(DS)'$, the above problem can be reduced to solving $n$ 1-dimensional problems,
\[
\begin{array}{ll}
\min        & (d_i+t)^2 \\
\mbox{s.t.} & |t| \le \gamma_2,
\end{array}
\]
for $i=1,2,...,n$. Observe that the solutions for the above 1-dimensional models can be found using the following thresholding operator:
\[
{\Th}_{\gamma_2} (t):= \left\{
\begin{array}{rl}
-\gamma_2 , & \mbox{if $ t < - \gamma_2$}; \\
t , & \mbox{if $-\gamma_2 \le t \le \gamma_2$}; \\
\gamma_2 , & \mbox{if $ t > \gamma_2$}. \\
\end{array} \right.
\]
Therefore, a solution for $(DS)$ is $\by_2^*=\textbf{Th}_{\gamma_2}(-\bd)$, where $(\textbf{Th}_{\gamma_2}(-\bd))_i = \Th_{\gamma_2}(-d_i)$ for all $i$, and,
\[
\by_1^* = \left\{ \begin{array}{ll} -\bd-\textbf{Th}_{\gamma_2}(-\bd), & \mbox{ if $\|\bd+\textbf{Th}_{\gamma_2}(-\bd)\| \le \gamma_1$}; \\ - \gamma_1 (\bd+\textbf{Th}_{\gamma_2}(-\bd)) / \|\bd+\textbf{Th}_{\gamma_2}(-\bd)\|, & \mbox{ if $\|\bd+\textbf{Th}_{\gamma_2}(-\bd)\| > \gamma_1$}. \end{array} \right.
\]
After solving $(DS)$ with the optimal solution $(\by_1^*,\by_2^*)$, one recovers the optimal solution to $(S)$ as $\bx^*=\bd+\by_1^*+\by_2^*$ using the results from Section \ref{Fenchel Duality}. In particular,
\[
\bx^* = \left\{ \begin{array}{ll}
0, & \mbox{ if $\|\bd+\textbf{Th}_{\gamma_2}(-\bd)\| \le \gamma_1$}; \\
\frac{(\|\bd+\textbf{Th}_{\gamma_2}(-\bd)\|- \gamma_1)}{\|\bd+\textbf{Th}_{\gamma_2}(-\bd)\|} \left(\bd+\textbf{Th}_{\gamma_2}(-\bd)\right), & \mbox{ if $\|\bd+\textbf{Th}_{\gamma_2}(-\bd)\| > \gamma_1$}.
\end{array} \right.
\]
Hence, by Fenchel duality we obtain a closed form solution to $(S)$ yielding an exact solution to the required subproblems to apply {C-FISTA} to the sparse-group Lasso model. Similar approaches can be taken to solve other potential subproblems which arise when applying {C-FISTA}. In the next section, we present the algorithms and numerical experiments for solving the models discussed in Section \ref{Lasso and log reg models}.

%============================================================================
\section{Numerical Experiments} \label{numerical results}
%============================================================================

To demonstrate the practicality and efficiency of {C-FISTA}, we conducted numerous experimental tests on group Lasso, sparse-group Lasso, overlapping sparse-group Lasso, sparse-group logistic regression,  and regularized geometric programming models. The overall structure of this section is as follows: Subsection \ref{group_Lasso} details how to apply {C-FISTA} to solve the Lasso formulations; { Subsection \ref{Lasso_Exp} contains the results of the numerical experiments conducted on the Lasso models; Subsections \ref{SGLR} and \ref{SGLR_Exp} present the solution procedure and numerical results for the sparse-group logistic regression model; Subsections \ref{GeoProg} and \ref{GeoProg_exp} describe the application of C-FISTA to a set of regularized geometric programs and presents some numerical results.
}

%============================================================================
\subsection{Lasso Models}\label{group_Lasso}
%============================================================================
{In this section we state how to apply {C-FISTA} to solve the various Lasso models described in Section \ref{Lasso and log reg models}. {In order to apply C-FISTA, the practitioner must select a decomposition of the objective function, i.e. the user must define $H$ and $\bB$ in \eqref{eqn:genCCO}. For the sake of exposition, in Sections \ref{Group_Lasso_process},  \ref{SGL_process}, and \ref{OSGL_process}, we decompose $H(\bB(\bx)) = \frac{1}{2}\|\bA\bx - \bb\|^2$ as $H(\bx) = \frac{1}{2}\|\bA\bx - \bb\|^2$ and $\bB(\bx) = \bx$. By doing this we have $\xi = 0$, $r=1$, and $\tau = 1$. Therefore, the strong convexity constant $\mu$ and gradient Lipschitz constant $L$ for $H$ are the only parameters which must be estimated to apply C-FISTA. This was the convention chosen for the algorithms presented in these sections as it enables more concise algorithmic descriptions; however, another viable decomposition is to have $H(\bx) = \frac{1}{2}\|\bx - \bb\|^2$ and $\bB(\bx) = \bA\bx$. This decomposition is utilized in Section \ref{undet_GL}. Under this decomposition, one readily obtains the strong convexity and gradient Lipschitz constants as $\mu = L = 1$. As for the other constants, the linearity of $\bB$ yields $\xi = 0$ and $r = \lambda_{\text{max}}(\bA^\top \bA)$. Hence, the only constant to estimate is $\tau$ from assumption (A2). The ability to chose the decomposition for a particular problem is a benefit of C-FISTA. This freedom enables the practitioner to select the most beneficial and/or convenient decomposition for their model of interest. 

We now begin our discussion with how to apply C-FISTA to the group Lasso formulation. 
}

%============================================================================
\subsubsection{C-FISTA for Group Lasso}\label{Group_Lasso_process}
%============================================================================
Applying {C-FISTA} to solve any model hinges on computing the proximal mapping in \eqref{eqn:x-def}. This key subproblem in the group Lasso model is of the form,
\begin{equation} \label{eqn:GL_subprob}
\begin{array}{ll}
\min & \frac{1}{2} \| \bx - \bd \|^2 + \gamma \| \bx \| \\
\mbox{s.t.} & \bx \in \RR^n,
\end{array}
\end{equation}
which by the first-order optimality conditions has the simple closed-form solution,
\[
\bx^* = \begin{cases} 0, & \text{ if } \| \bd\| \leq \gamma; \\ \left( \frac{ \|\bd\| - \gamma}{\|\bd\|}\right)\bd, & \text{ if } \|\bd\| > \gamma . \end{cases}
\]
With the solution to the group Lasso subproblems \eqref{eqn:GL_subprob}, we write down how to solve the group Lasso model with {C-FISTA} in Algorithm \ref{alg:C_FISTA_GL} {where $H(\bx) = \frac{1}{2}\|\bA\bx - \bb\|^2$ and $\bB(\bx) = \bx$.}

\begin{algorithm}
\caption{C-FISTA for $(GL)$}
\begin{algorithmic}\label{alg:C_FISTA_GL}
{ \STATE \textbf{Input:} Constants $\mu$ and $L$; penalty parameter $\gamma > 0$; vector partition $\mathcal{G}$.}
\STATE \textbf{Step 0.} Choose any $(\bx^0,\bz^0) \in \mathbb{R}^n \times \mathbb{R}^n$. Let $k:=0$.

\STATE \textbf{Step 1.} Let,
\[ \by^k := \frac{1}{1 + \theta} \bx^k + \frac{\theta}{1 + \theta} \bz^k. \]

%\STATE \textbf{Step 2.}  For $j=1,\hdots, J$:
\STATE \textbf{Step 2.}  {For each $j \in \mathcal{G}$:}
\[ \bx(j)^{k+1} := \begin{cases} 0, & \|\bd(j)\| \leq \gamma/L; \\ \left( \frac{\|\bd(j)\| - \gamma/L}{\|\bd(j)\|} \right) \bd(j), & \|\bd(j)\| > \gamma/L; \end{cases}\]
with $\bd:= \by^k - \frac{1}{L}\nabla H(\by^k) = \by^k - \frac{1}{L}\bA^\top (\bA\by^k -\bb )$ and corresponding subvectors $\bd(j)$.

\vspace{0.05in}
\STATE \textbf{Step 3.}  \[ \bz^{k+1}:= (1-\theta) \bz^k + \theta \by^k + {\alpha} \left(\bx^{k+1} - \by^k \right),\]
{where $\theta = \sqrt{\mu/L}$ and $\alpha = \sqrt{L/\mu}$.}\vspace{3mm}
\STATE \textbf{Step 4.}  Let $k:= k+1$; return to Step 1 until convergence.
\end{algorithmic}
\end{algorithm}

Practical application of Algorithm \ref{alg:C_FISTA_GL} requires the user to determine bounds for the Lipschitz constant and strong convexity constant of $H$. For the group Lasso models $H(\bx):=\frac{1}{2}\|\bA\bx-\bb\|^2$ making such bounds readily available. By the definition of strong convexity we see $\mu = \lambda_{\min}(\bA^\top \bA)$. Similarly, using the definition of the Lipschitz constant, we see that $L \leq \lambda_{\max}(\bA^\top \bA)$ where $\lambda_{\min}(\bA^\top \bA)$ and $\lambda_{\max}( \bA^\top \bA)$ are the smallest and largest eigenvalues of $\bA^\top \bA$ respectively. For other strongly convex functions $H$ on which {C-FISTA} is applicable, tight bounds for $L$ and $\mu$ might be unavailable. In these situations conservative estimates for these bounds or a backtracking scheme such as in \cite{BT09} to estimate these parameters would be required. In this paper we do not focus on subroutines to estimate these bounds in our algorithms; however, the backtracking strategies applied in \cite{BT09,CC19,FV19} could similarly be implemented in many instances of {C-FISTA}.

%============================================================================
\subsubsection{C-FISTA for Sparse-Group Lasso}\label{SGL_process}
%============================================================================

The key subproblem in applying {C-FISTA} to solve the sparse-group Lasso model was determined in Section \ref{duality-subproblems}; thus, we can write Algorithm \ref{alg:C_FISTA_SGL} to solve the sparse-group Lasso model with {C-FISTA}. 
\begin{algorithm}
\caption{C-FISTA for $(SGL)$}
\begin{algorithmic}\label{alg:C_FISTA_SGL}
{\STATE \textbf{Input:} Constants $\mu$ and $L$; penalty parameters $\gamma_1, \gamma_2 > 0$; vector partition $\mathcal{G}$.}
\STATE \textbf{Step 0.} Choose any $(\bx^0,\bz^0) \in \mathbb{R}^n \times \mathbb{R}^n$. Let $k:=0$. 

\STATE \textbf{Step 1.} Let
\[ \by^k := \frac{1}{1 + \theta} \bx^k + \frac{\theta}{1 + \theta} \bz^k. \]

\STATE \textbf{Step 2.} {For each $j \in \mathcal{G}$:} define $\mathbf{\Omega}(j) := \|\bd(j)+\textbf{Th}_{\gamma_2/L}(-\bd(j))\|$ and,
\[
\bx(j)^{k+1}\hspace{-0.05in} :=\hspace{-0.03in} \left\{ \begin{array}{ll}
\hspace{-0.02in} 0, & \hspace{-0.05in}\mbox{ if $\mathbf{\Omega}(j) \le \frac{\gamma_1}{L}$}; \\
\hspace{-0.07in}\left(\frac{\|\bd(j)+\textbf{Th}_{\gamma_2/L}(-\bd(j))\|- \gamma_1/L)}{\|\bd(j)+\textbf{Th}_{\gamma_2/L}(-\bd(j))\|}\right) \left(\bd(j)+\textbf{Th}_{\gamma_2/L}(-\bd(j))\right), &\hspace{-0.05in} \mbox{ if $\mathbf{\Omega}(j) > \frac{\gamma_1}{L}$};
\end{array} \right.
\]
with $\bd:= \by^k - \frac{1}{L}\bA^\top (\bA\by^k - \bb )$ and corresponding subvectors $\bd(j)$. 
\STATE \textbf{Step 3.} \[ \bz^{k+1}:= (1-\theta) \bz^k + \theta \by^k + {\alpha} \left( \bx^{k+1} - \by^k \right),\]
{where $\theta = \sqrt{\mu/L}$ and $\alpha = \sqrt{L/\mu}$.}\vspace{3mm}
\STATE \textbf{Step 4.}  Let $k:= k+1$; return to Step 1 until convergence.
\end{algorithmic}
\end{algorithm}

Since $H$ is identical in all of the Lasso models, the same bounds for the Lipschitz and strong convexity constants in Algorithm \ref{alg:C_FISTA_GL} apply for each of the Lasso models. Comparing Algorithms \ref{alg:C_FISTA_GL} and \ref{alg:C_FISTA_SGL}, we note only Step 2 has been updated because the $\by$ and $\bz$-updates are independent of the objective function. The alteration in Step 2 is solely due to the difference in the regularization term in the group and sparse-group Lasso models which alters the proximal mapping \eqref{eqn:x-def}. 

%============================================================================
\subsubsection{C-FISTA for Overlapping Sparse-Group Lasso}\label{OSGL_process}
%============================================================================

The key difference between the overlapping sparse-group Lasso model and those previously discussed is the removal of the prohibition that the groups of variables cannot intersect. Allowing overlapping groups of variables removes the ability to decouple the minimization in \eqref{eqn:x-def} into subproblems over the individual subvectors. In the sparse-group Lasso model, since the groups of variables do not overlap, we were able to show in Section \ref{duality-subproblems} there was a closed form solution to the proximal mappings; however, with overlapping groups the key subproblem for implementing C-FISTA,
\begin{equation}\label{eqn:OSGL_subprob}
\min\; \frac{1}{2} \| \bx- \bd \|^2 + \frac{\gamma_1}{L} \sum_{j \in \GG} \| \bx(j) \| + \frac{\gamma_2}{L} \| \bx\|_1,
\end{equation}
is no longer decomposable and a simple closed form solution is unobtainable; therefore, we must solve this subproblem numerically at each iteration in-order to apply {C-FISTA} to the overlapping sparse-group Lasso model. In \cite{YLY11} the authors apply a variation of FISTA to solve the (OSGL) model. Our approach is distinct from their approach because we accelerate FISTA through the use of two sequences, $\{\by^k\}$ and $\{\bz^k\}$, while they use a single sequence performing a Nesterov-like acceleration. In order to apply their algorithm, the authors of \cite{YLY11} developed an efficient subroutine, {\it overlapping.c}, to numerically solve \eqref{eqn:OSGL_subprob}. Their subroutine is provided in the SLEP software package \cite{SLEP}. In our solving of the (OSGL) model with {C-FISTA}, we apply Algorithm \ref{alg:C_FISTA_SGL} exactly in the same fashion as for the sparse-group Lasso model except when computing $\bx^{k+1}$ in Step 2 we solve \eqref{eqn:OSGL_subprob} with \textit{overlapping.c} from the SLEP software package.

%============================================================================
\subsection{Lasso Numerical Experiments}\label{Lasso_Exp}
%============================================================================
{In this section, we describe the results from the numerical experiments conducted on the Lasso models. In the first set of experiments, the data matrix $A$ has full-column rank. In this setting, all assumptions stated in Section \ref{C-FISTA} are fully met. In practice, however, it is often unrealistic to assume the data matrix for a Lasso model is overdetermined; therefore, we conducted a second set of experiments on the group Lasso model with underdetermined data to demonstrate the applicability of C-FISTA in this setting. 

%============================================================================
\subsubsection{Overdetermined Lasso Models Numerical Tests}\label{lasso_test_1}
%============================================================================
For comparing {C-FISTA}, SLEP, ADMM and FISTA (without backtracking) on the overdetermined Lasso models, we constructed synthetic data sets for testing in a manner similar to the process utilized in \cite{ZZST20}. For our tests of the group and sparse-group Lasso models, we randomly generated a $m \times n$ data matrix $A$ from the standard normal distribution, and we formed three different subvector groups, $\mathcal{G}_1, \mathcal{G}_2,$ and $\mathcal{G}_3$, where the subvectors in $\mathcal{G}_i$ where of sizes $10, 100,$ and 200 respectively. Thus, in our experiments we let $\bA \in \mathbb{R}^{8000 \times 5000}$ and had,
\begin{align}
\mathcal{G}_1 &= \{ (1,\hdots,10), (11, \hdots ,20), \hdots, (4991, \hdots ,5000)\}, \nonumber \\
\mathcal{G}_2 &= \{ (1,\hdots,100), (101, \hdots ,200), \hdots, (4901, \hdots ,5000)\}, \nonumber \\
\mathcal{G}_3 &= \{ (1,\hdots,200), (201,\hdots ,400), \hdots, (4801, \hdots ,5000)\}. \nonumber
\end{align}
The response vectors $\bb^i$ for $i=1,2,3$ were formed as, 
\[ \bb^i = \bA\bx + \delta \cdot\mathbf{\epsilon}, \]
where each component of $\mathbf{\epsilon}$ was drawn from the standard norm distribution, $\delta>0$ provided a scaling factor for the noise, and $\bx(j) = (1,2,\hdots, 10,0,\hdots,0)^\top$ for $j=1,2,\hdots,10$ with $\bx(j) =\mathbf{0}$ for the remaining subvector groups $j=11,\hdots, |\mathcal{G}_i|$. We set the positive penalty parameters $\gamma_1$ and $\gamma_2$ to be equal and of a magnitude to ensure sparse but non-trivial solutions.

For the overlapping sparse-group Lasso model we constructed the data matrix $A$ in the same manner, and let the subvector groups be,
\begin{align}
\mathcal{G}_1 &= \{ (1,\hdots,10), (6,\hdots ,15), \hdots,(4986,\hdots, 4995), (4991, \hdots ,5000)\}, \nonumber \\
\mathcal{G}_2 &= \{ (1,\hdots,100), (51,\hdots ,150), \hdots,(4851,\hdots,4950), (4901, \hdots ,5000)\}, \nonumber \\
\mathcal{G}_3 &= \{ (1,\hdots,200), (101,\hdots ,300), \hdots, (4701,\hdots,4900), (4801, \hdots ,5000)\}. \nonumber
\end{align}
Thus, each of the adjacent subvectors in $\mathcal{G}_i$ for $i=1,2,3$ overlapped with another subvector. The response vectors were constructed as done for the group and sparse-group Lasso models.

%===================================
The overdetermined Lasso experiments were conducted in MATLAB R2021. Implementation of C-FISTA was as detailed in Sections \ref{Group_Lasso_process},  \ref{SGL_process}, and \ref{OSGL_process}. From the SLEP software package \cite{SLEP} we utilized: {\it glLeastR.m}, {\it sgLeastR.m} and {\it overlapping\textunderscore LeastR.m} to solve the group, sparse-group and overlapping sparse-group Lasso models respectively. Sections 3, 7 and 9 of \cite{SLEP} provide details for these first-order proximal gradient methods. FISTA, without backtracking, was applied as described in \cite{BT09}. All iterative sequences in the algorithms were initialized at zero. For comparison, all of the Lasso models were tested with the same data matrix $\bA$ and response vectors $\bb^i$. Figure \ref{fig:comp_results} displays the convergence results for C-FISTA, SLEP, ADMM and FISTA on the three Lasso models. In Figure \ref{fig:comp_results}, the y-axis measures the absolute difference from the optimal objective value at each iteration of the individual algorithms. We took the final value of the objective function of C-FISTA as the exact optimal solution which agreed, as can be seen in Figure \ref{fig:comp_results}, to within machine tolerance of the final iterates produced by each of the methods. Note, we fine-tuned the augmented Lagrangian parameter $\rho$ in the ADMM algorithm to achieve the optimal performance for the method.

The computational results displayed in Figure \ref{fig:comp_results} clearly demonstrate the proven global linear convergence of C-FISTA. While ADMM, SLEP and FISTA all display linear convergence properties in some of the tests, C-FISTA significantly outperformed the other methods. Furthermore, C-FISTA maintained a robustness between the various Lasso models. While SLEP's convergence rate suffered as the subvector group sizes increased in the group Lasso model, the convergence rate of C-FISTA did not suffer in the group or sparse-group Lasso models as the group sizes increased. Overall, the tests demonstrate C-FISTA's effectiveness and robustness to varying subvector group sizes throughout the various Lasso formulations. 
%
%The results for the overlapping sparse-group Lasso tests are of particular interest because SLEP is considered the state-of-the-art first order approach for this model, and C-FISTA was about two times quicker than SLEP in our tests.
%
% (CUT FROM Paragraph above) -  {\it sgLogisticR.m} to solve the group, sparse-group and overlapping sparse-group Lasso models and sparse-group logistic regression model respectively.
%===================================
\begin{figure}%\label{fig:comp_results}
\centering
\begin{minipage}[c]{0.32\textwidth}
\centering 
% Tests labels at the top of the figure. 
{\bf \underline{GL Tests}}
\end{minipage}%\hspace{0.02in}
\begin{minipage}[c]{0.32\textwidth}
\centering 
{\bf \underline{SGL Tests}}
\end{minipage}%\hspace{0.02in}
\begin{minipage}[c]{0.32\textwidth}
\centering 
{\bf \underline{OSGL Tests}}
\end{minipage}
\vspace{0.05in}

% Figures from the experiment
\begin{minipage}[c]{0.32\textwidth}
\label{fig:GL_10}
\centering 
\includegraphics[width = \textwidth]{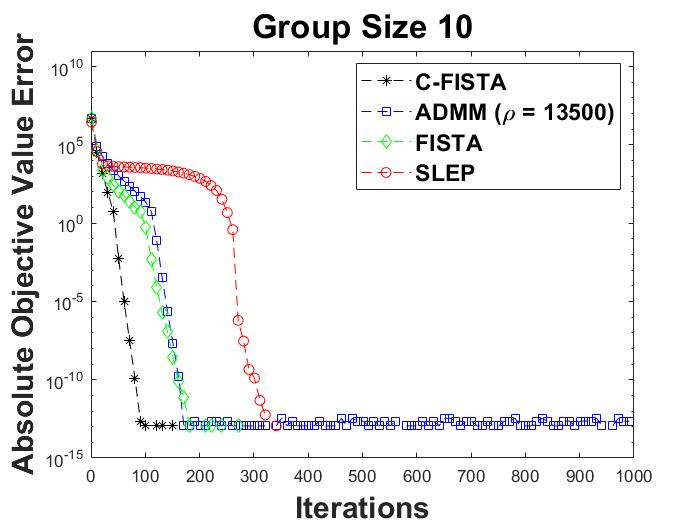}
\end{minipage}\hspace{0.02in}
\begin{minipage}[c]{0.32\textwidth}
\label{fig:GL_100}
\centering 
\includegraphics[width = \textwidth]{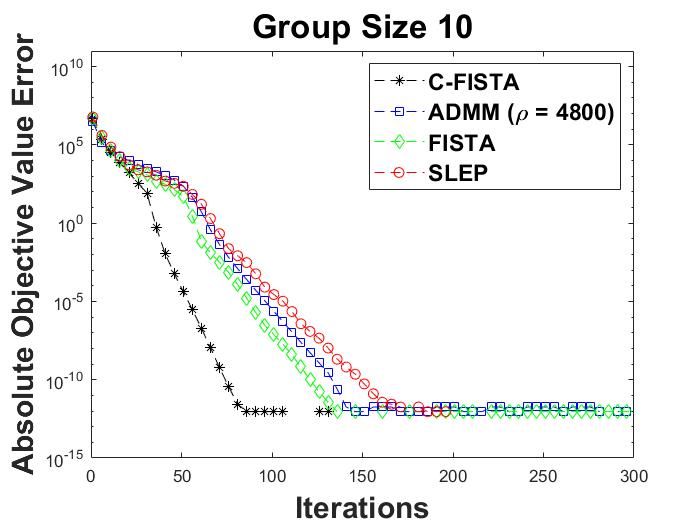}
\end{minipage}\hspace{0.02in}
\begin{minipage}[c]{0.32\textwidth}
\label{fig:GL_200}
\centering 
\includegraphics[width =\textwidth]{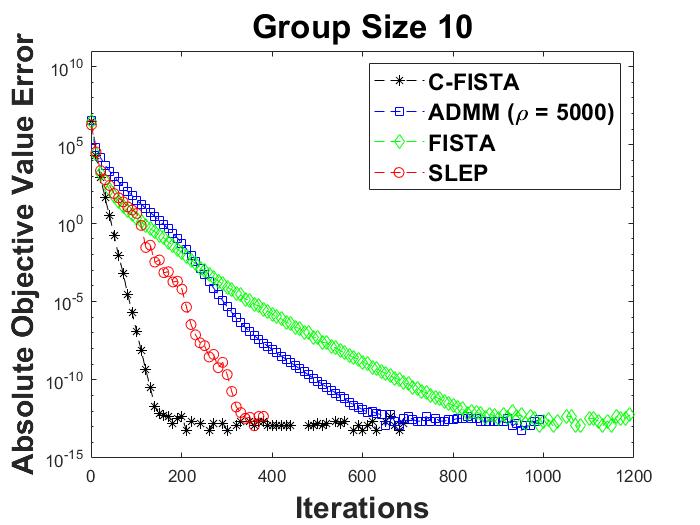}
\end{minipage}

\vspace{0.025in}
\begin{minipage}[c]{0.325\textwidth}
\label{fig:SGL_10}
\centering 
\includegraphics[width = \textwidth]{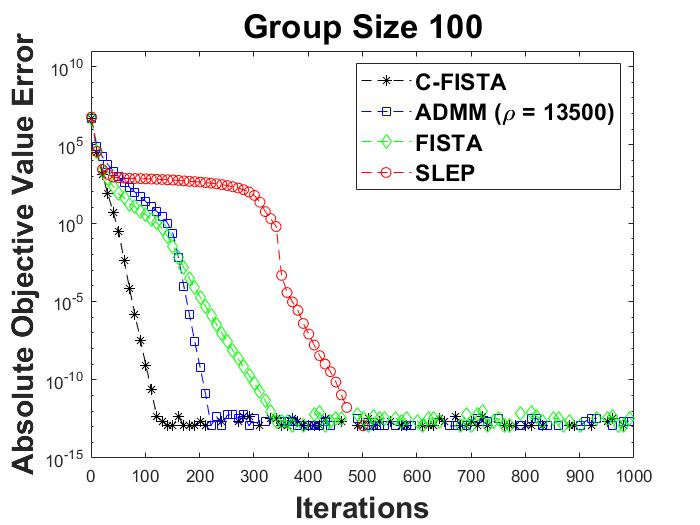}
\end{minipage}\hspace{0.02in}
\begin{minipage}[c]{0.325\textwidth}
\label{fig:SGL_100}
\centering 
\includegraphics[width = \textwidth]{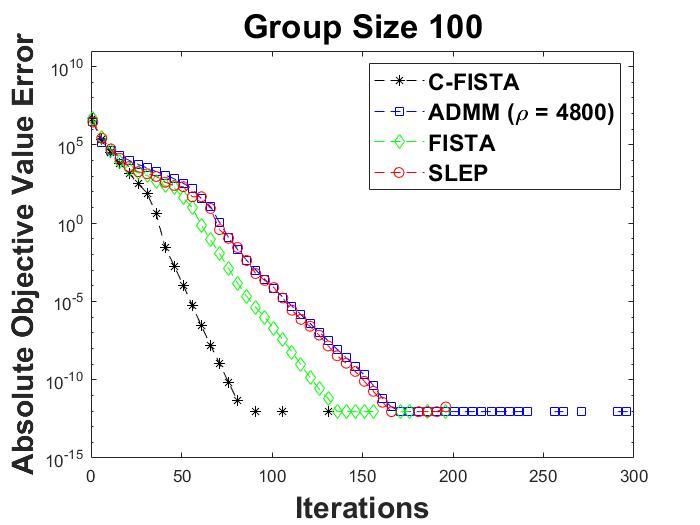}
\end{minipage}\hspace{0.02in}
\begin{minipage}[c]{0.325\textwidth}
\label{fig:SGL_200}
\centering 
\includegraphics[width =\textwidth]{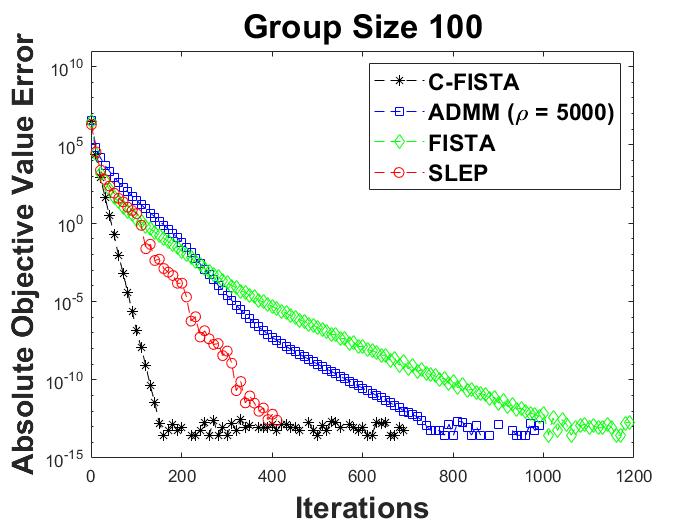}
\end{minipage}

\vspace{0.025in}
\begin{minipage}[c]{0.325\textwidth}
\label{fig:OSGL_10}
\centering 
\includegraphics[width = \textwidth]{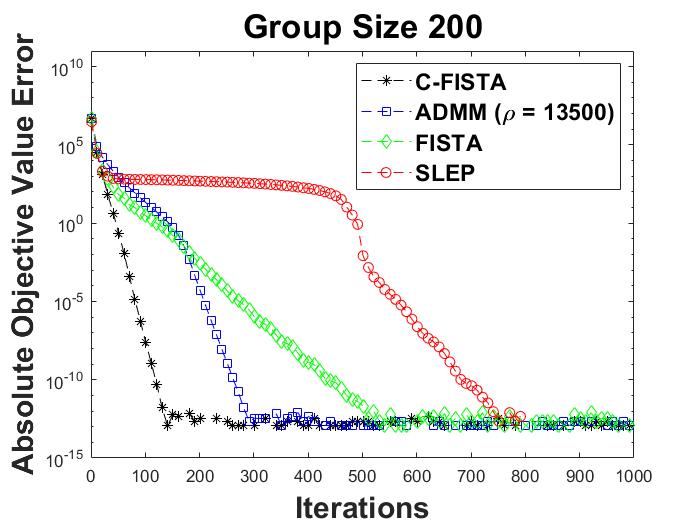}
\end{minipage}\hspace{0.02in}
\begin{minipage}[c]{0.325\textwidth}
\label{fig:OSGL_100}
\centering 
\includegraphics[width = \textwidth]{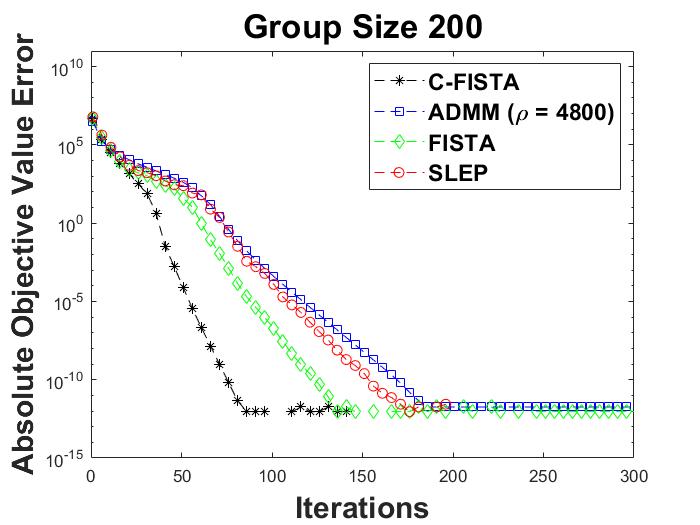}
\end{minipage}\hspace{0.02in}
\begin{minipage}[c]{0.325\textwidth}
\label{fig:OSGL_200}
\centering 
\includegraphics[width =\textwidth]{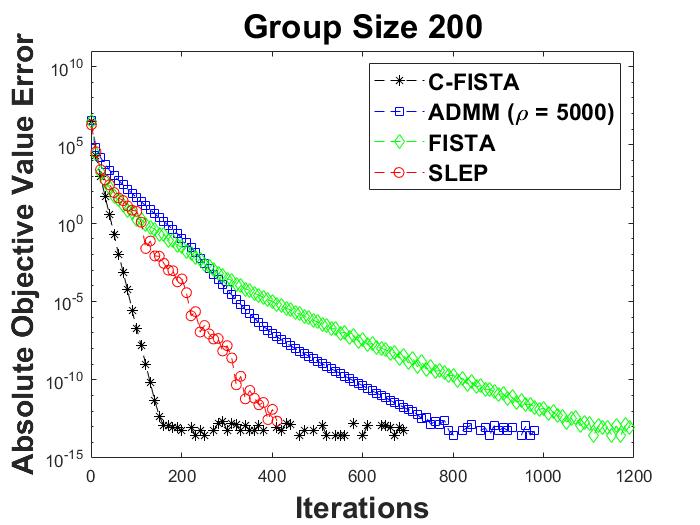}
\end{minipage}
\caption{The numerical results for the Lasso models $(GL)$, $(SGL)$ and $(OSGL)$ with three different subvector groupings. The $y$-axis is given in logarithmic scale and measures the absolute difference in the objective value and the optimal objective value at each iteration; $\gamma = 5$, $\gamma_1 = \gamma_2 = 10$ and $\gamma_1 = \gamma_2 = 0.1$ for the $(GL)$, $(SGL)$ and $(OSGL)$ models respectively}\label{fig:comp_results}
\end{figure}

%============================================================================
\subsubsection{Underdetermined Group Lasso Numerical Tests}\label{undet_GL}
%============================================================================
The second set of Lasso experiments focused on underdetermined data. In particular, the group Lasso model was studied with a rank deficient data matrix. With underdetermined data, the assumptions for Theorem \ref{thm:iter-complexity} are not fully realized; however, as previously claimed and will be shown, in practice asymptotic linear convergence is still possible when the global assumptions (A2) and/or (A3) are not fully realized. To demonstrate this we conducted an extensive numerical experiment comparing FISTA, without backtracking, to C-FISTA on the group Lasso model with underdetermined data. 

For the experiment, we let $H(\bx) = \frac{1}{2}\|\bx - \bb\|^2$ and $\bB(\bx) = \bA\bx$. Therefore, the strong convexity and gradient Lipschitz constants for $H$ are $\mu = L = 1$,  $\xi = 0$, and $r = \lambda_{\text{max}}(\bA^\top \bA)$. The only remaining constant to compute is $\tau > 0$ which by (A2) needs to satisfy, 
%The only remaining constant necessary to apply C-FISTA is $\tau > 0$ which by (A2) needs to satisfy, 
\[ 
\tau\|\bx-\by\|^2 \leq \| \bA(\bx-\by)\|^2, 
\]
for all $\bx, \by \in \RR^n$. Since $\bA$ is not of full-column rank, this assumption cannot hold; however, choosing a small positive value for $\tau$ ensures the inequality will hold for a sufficient number of iterates generated by Algorithm \ref{alg:C FISTA alg}. This enables C-FISTA to maintain its convergence though assumption (A2) is not formally satisfied. 

In our underdetermined group Lasso experiments, we generated synthetic data matrices and response vectors as in Section \ref{lasso_test_1}. Randomly generated matrices with dimensions: $2500 \times 5000, 1200 \times 5000$ and $600 \times 5000$ were constructed, and each of these group Lasso problem sizes were solved under the three subvector groupings from Section \ref{lasso_test_1}. We generated ten instances of these nine group Lasso settings and solved the models with FISTA and C-FISTA under three different parameter settings for $\tau$. Table \ref{table_1} displays the results for these ninety numerical experiments. Each entry in the table provides the average number of iterations for the respective algorithms to converge to within $10^{-10}$ of the optimal objective value.

From Table \ref{table_1} we observe C-FISTA is convergent for a range of $\tau$ values demonstrating a robustness to miss-specification. Of the different values for $\tau$ selected, only the selection of $\tau = 1$ failed to converge after 20,000 iterations and this was only for the case $\bA \in \RR^{600 \times 5000}$ and the group variable size was 200. Second, we note C-FISTA outperformed FISTA consistently with the best algorithm for each setting being C-FISTA with either $\tau = 0.1$ or $\tau = 1$. A visual representation of the performance comparison is given in Figure \ref{fig:GL_comp_results}. From the convergence plots we see C-FISTA would often display asymptotic linear convergence when the group sizes were 100 and 200 while FISTA would converge sub-linearly. When the group size was 10, irrespective of the dimension of the data matrix, C-FISTA and FISTA were comparable and demonstrated asymptotic linear convergence; however, with $\tau=1$, C-FISTA converged between 300 and 500 iterations sooner on average then FISTA  in this setting. These experiments demonstrate that C-FISTA maintains linear convergence properties even when some of the stated assumptions are not strictly met and showcases the robustness of the parameter $\tau$ to miss-specification. 

\begin{table}[tbhp]
\centering
\caption{
This table shows the average number of iterations for C-FISTA and FISTA to converge to within $10^{-10}$ of the optimal objective value for 10 randomized Group Lasso problems of varying dimensions and group vector sizes. An entry of {\it NaN} means the given algorithm did not converge to within the stated tolerance after 20,000 iterations for at least one of the ten randomized tests} \label{table_1}
\hspace{-0.05in}
\begin{tabular}{|c|lllllllll|} 
\hline 
\textbf{} & \multicolumn{9}{c|}{\textbf{Data Dimension}} \\ \cline{2-10} 
\multicolumn{1}{|l|}{\textbf{Method}} & \multicolumn{3}{c|}{2500 x 5000} & \multicolumn{3}{c|}{1200 x 5000} & \multicolumn{3}{c|}{600 x 5000} \\ \cline{2-10} 
\multicolumn{1}{|l|}{} & \multicolumn{3}{c|}{\textbf{Group Size}} & \multicolumn{3}{c|}{\textbf{Group Size}} & \multicolumn{3}{c|}{\textbf{Group Size}} \\ \hline 
C-FISTA & \multicolumn{1}{c}{10} & 100 & \multicolumn{1}{l|}{200} & \multicolumn{1}{c}{10} & 100 & \multicolumn{1}{l|}{200} & \multicolumn{1}{c}{10} & 100 & \multicolumn{1}{c|}{200} \\ \hline 
\multicolumn{1}{|r|}{$\tau = 0.01$}   &  2194   &  4069   & \multicolumn{1}{l|}{15718}     & 2170    & 13546    & \multicolumn{1}{l|}{13288}    & 2208   & 12014    & 11565  \\ 
\multicolumn{1}{|r|}{$\tau = 0.1$} &  1129   &  2202   & \multicolumn{1}{l|}{\bf5331}    & 1202    & {\bf 4686}    & \multicolumn{1}{l|}{\bf4436}    & 1439   & {\bf 4088}    & {\bf4140}   \\  
\multicolumn{1}{|r|}{$\tau = 1$} &   {\bf 624}   &  {\bf 2049}   & \multicolumn{1}{l|}{15517}    & {\bf 833}    & 14096    & \multicolumn{1}{l|}{15473}    & {\bf 1292}   & 17327    &  NaN   \\ \hline 
FISTA                              &   945   &  3200   & \multicolumn{1}{l|}{ NaN}    & 1284    &  NaN   & \multicolumn{1}{l|}{ NaN}    & 1780   &  NaN    &  NaN   \\ \hline 
\end{tabular} 
\vspace{0.01in}
\end{table}

\begin{figure}[h]%\label{fig:GL_comp_results}
\centering
\begin{minipage}[c]{0.32\textwidth}
\centering 
% Tests labels at the top of the figure. 
{\bf \underline{Dimension: $2500 \times 5000$}}
\end{minipage}\hspace{0.02in}
\begin{minipage}[c]{0.32\textwidth}
\centering 
{\bf \underline{Dimension: $1200 \times 5000$}}
\end{minipage}\hspace{0.02in}
\begin{minipage}[c]{0.32\textwidth}
\centering 
{\bf \underline{Dimension: $600 \times 5000$}}
\end{minipage}
\vspace{0.05in}

% Figures from the experiment
\begin{minipage}[c]{0.325\textwidth}
\centering 
\includegraphics[width = \textwidth]{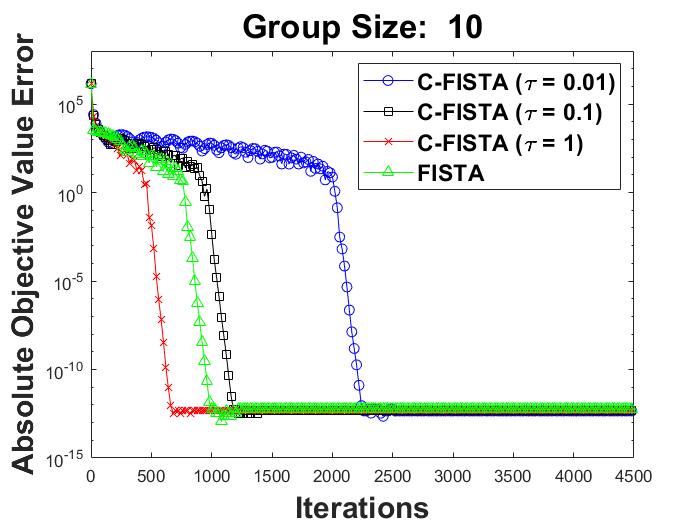}
\end{minipage}\hspace{0.02in}
\begin{minipage}[c]{0.325\textwidth}
\centering 
\includegraphics[width = \textwidth]{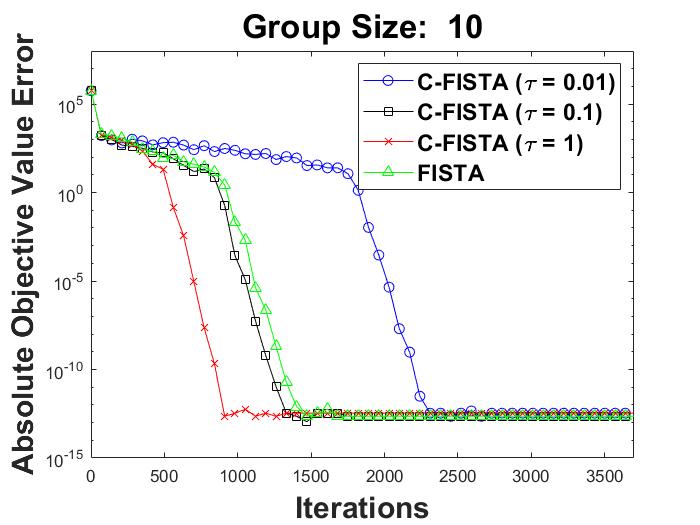}
\end{minipage}\hspace{0.02in}
\begin{minipage}[c]{0.325\textwidth}
\centering 
\includegraphics[width =\textwidth]{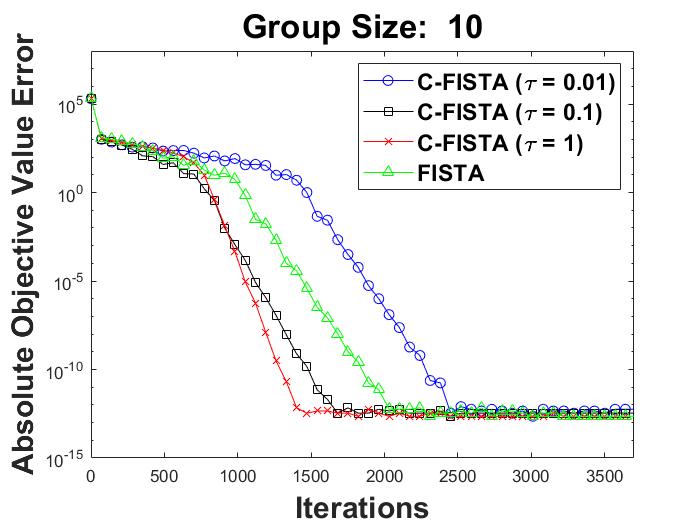}
\end{minipage}

\vspace{0.025in}
\begin{minipage}[c]{0.325\textwidth}
\centering 
\includegraphics[width = \textwidth]{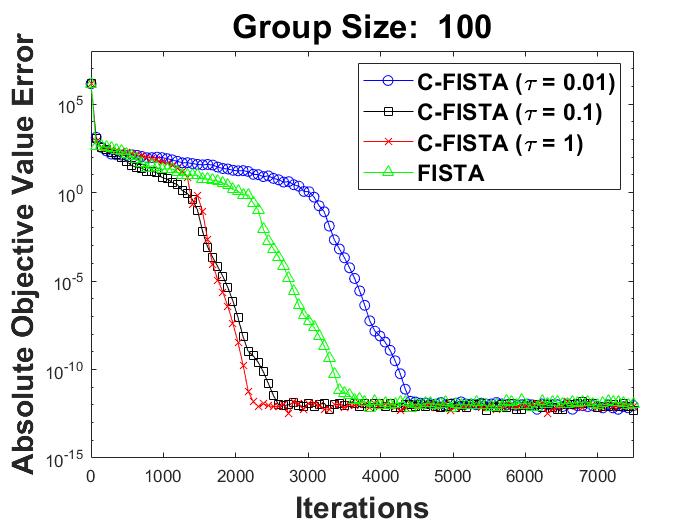}
\end{minipage}\hspace{0.02in}
\begin{minipage}[c]{0.325\textwidth}
\centering 
\includegraphics[width = \textwidth]{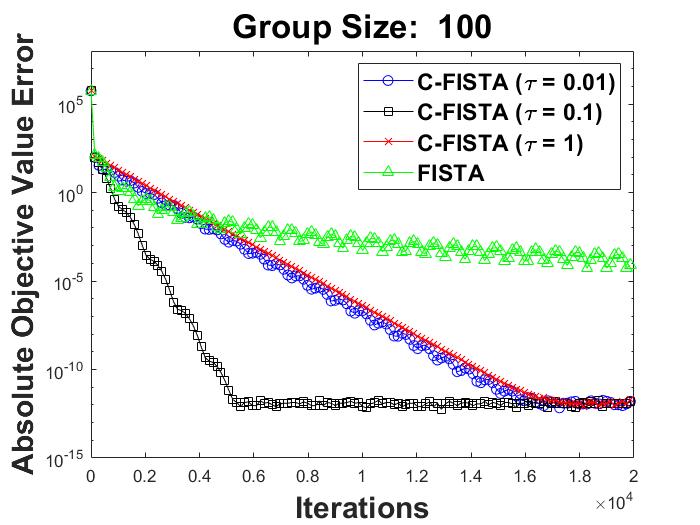}
\end{minipage}\hspace{0.02in}
\begin{minipage}[c]{0.325\textwidth}
\centering 
\includegraphics[width =\textwidth]{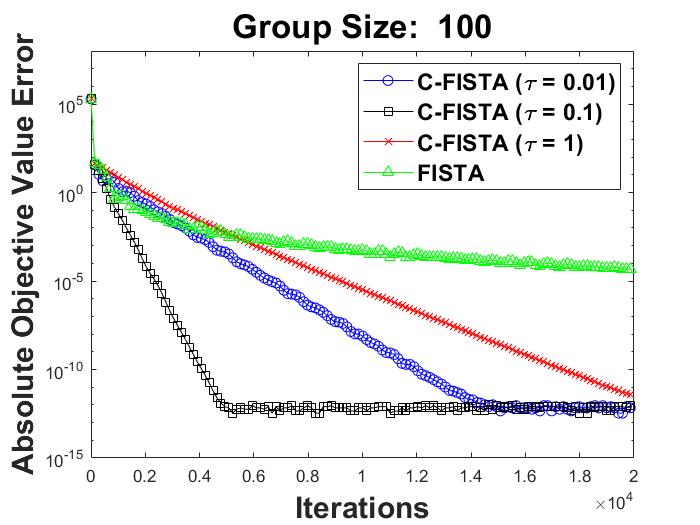}
\end{minipage}

\vspace{0.025in}
\begin{minipage}[c]{0.325\textwidth}
\centering 
\includegraphics[width = \textwidth]{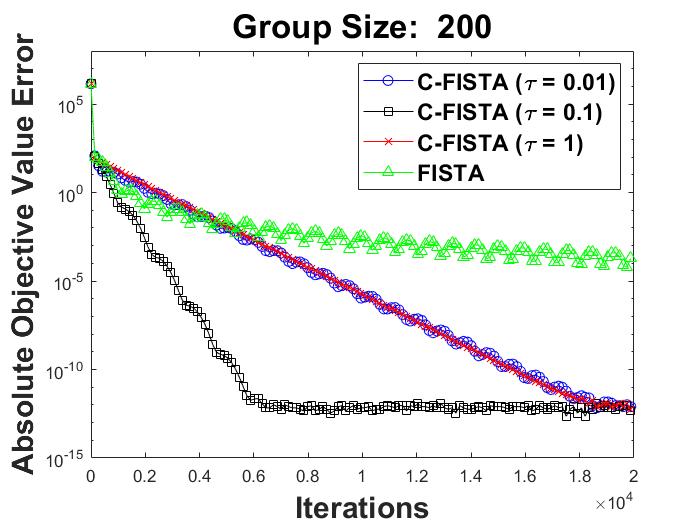}
\end{minipage}\hspace{0.02in}
\begin{minipage}[c]{0.325\textwidth}
\centering 
\includegraphics[width = \textwidth]{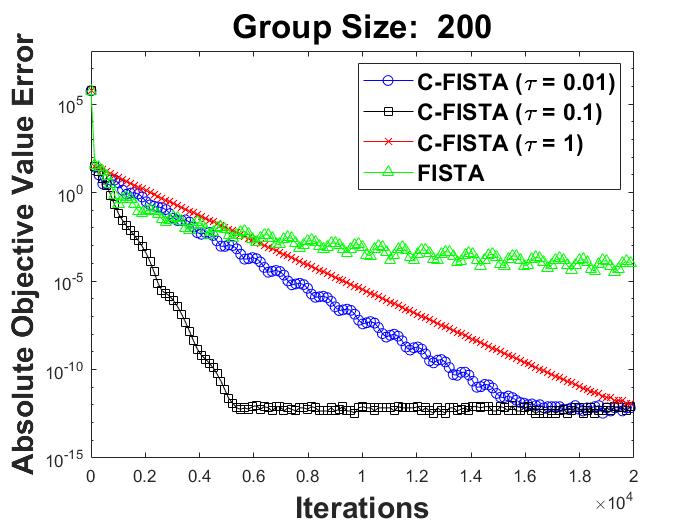}
\end{minipage}\hspace{0.02in}
\begin{minipage}[c]{0.325\textwidth}
\centering 
\includegraphics[width =\textwidth]{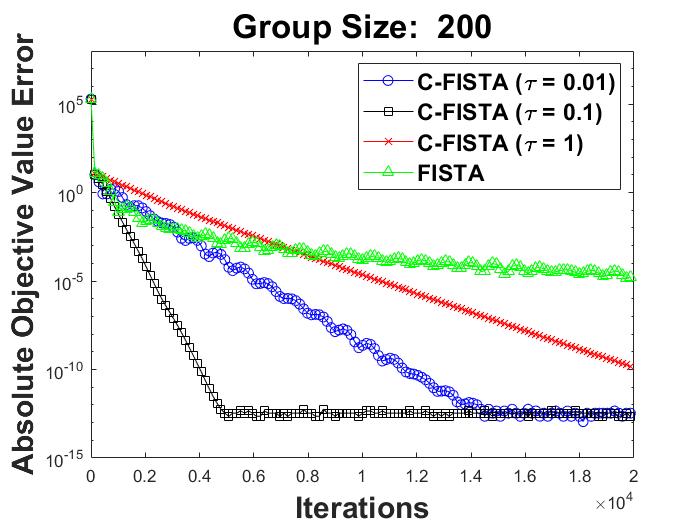}
\end{minipage}

\caption{Sample plots of the underdetermined group Lasso tests between C-FISTA and FISTA with varying problem dimensions and subvector group sizes. C-FISTA was implemented with three different parameter settings for $\tau$ in each test. The penalty parameter was set as $\gamma = 5$ for all of the tests}\label{fig:GL_comp_results}
\end{figure}

%============================================================================
\subsection{C-FISTA for Sparse-Group Logistic Regression}\label{SGLR}
%============================================================================

In this section we describe how to apply {C-FISTA} to solve the sparse-group logistic regression model,
\[
\begin{array}{lll}
 &\min &\frac{1}{m} \sum_{i=1}^{m} \ln\left( 1 +\exp\left( -y_i \left( \ba_i^\top \bx + b \right) \right) \right) + \gamma_1 \sum_{j \in \GG} \| \bx(j) \| + \gamma_2 \| \bx\|_1 \\
& \mbox{s.t.} & \bx \in \RR^n, b \in \RR.
\end{array}
\]
{As in the Lasso formulations, a decomposition must be chosen to tackle the regression model with C-FISTA. For continuity with the previous discussion, we will utilize the decomposition $\bB(\bx) = \bx$ and $H(\bx) =\frac{1}{m} \sum_{i=1}^{m} \ln( 1 +\exp( -y_i \langle (\ba_i^\top, 1)^\top, \bx \rangle)).$ Under these definitions for $H$ and $\bB$,} the sparse-group Lasso and sparse-group logistic regression models only differ with respect to their strongly convex term, so the procedure for solving the sparse-group logistic regression model with {C-FISTA} is very similar to Algorithm \ref{alg:C_FISTA_SGL}. Only two alterations to Algorithm \ref{alg:C_FISTA_SGL} are required to solve the sparse-group logistic regression model. One, we must update $\bd$ in Algorithm \ref{alg:C_FISTA_SGL} to be,
\begin{equation}\label{eqn:SGLR_grad}
 \bd = \by^k - \frac{1}{L} \nabla H(\by^k) = \by^k + \frac{1}{mL}\sum_{i=1}^{m} \frac{y_i \exp(-y_i (\ba_i^\top \by^k + b))}{1 + \exp(-y_i( \ba_i^\top \by^k +b ))} \begin{pmatrix} \ba_i \\ 1 \end{pmatrix},
\end{equation}
and second we require new estimates for the bounds of the Lipschitz and strong convexity constants $L$ and $\mu$. Updating $\bd$ is straightforward, but obtaining tight upper and lower bounds on the Lipschitz and strong convexity constants can become a challenging task as mentioned previously. Algorithm \ref{alg:C_FISTA_SGLR} presents {C-FISTA} for solving the sparse-group logistic regression model. In our implementation of Algorithm \ref{alg:C_FISTA_SGLR}, we fixed the Lipschitz and strong convexity parameters though backtracking procedures could be utilized. 

\begin{algorithm}
\caption{C-FISTA for $(SGLR)$}
\begin{algorithmic}\label{alg:C_FISTA_SGLR}
{\STATE \textbf{Input:} Constants $\mu$ and $L$; penalty parameters $\gamma_1, \gamma_2 > 0$; vector partition $\mathcal{G}$.}
\STATE \textbf{Step 0.} Choose any $(\bx^0, b^0, \bz^0) \in \mathbb{R}^n \times \mathbb{R} \times \RR^{n+1}$. Let $k:=0$.  

\STATE \textbf{Step 1.} Let
\[ \by^k := \frac{1}{1 + \theta}\begin{pmatrix} \bx^k  \\ b^k \end{pmatrix}+ \frac{\theta}{1 + \theta} \bz^k. \]

\STATE \textbf{Step 2.} {For $j\in \mathcal{G}$}: define $\mathbf{\Omega}(j) := \|\d(j)+\textbf{Th}_{\gamma_2/L}(-\d(j))\|$ and,
\[
\bx(j)^{k+1}\hspace{-0.02in} :=\hspace{-0.03in} \left\{ \begin{array}{ll}
\hspace{-0.02in} 0, & \hspace{-0.05in}\mbox{ if $\mathbf{\Omega}(j) \le \gamma_1/L$}; \\
\hspace{-0.07in}\left(\frac{\|d(j)+\Th_{\gamma_2/L}(-d(j))\|- \gamma_1/L)}{\|d(j)+\Th_{\gamma_2/L}(-d(j))\|}\right) \left(d(j)+\Th_{\gamma_2/L}(-d(j))\right), &\hspace{-0.05in} \mbox{ if $\mathbf{\Omega}(j) > \gamma_1/L$};
\end{array} \right.
\]
where $d$ is as defined in \eqref{eqn:SGLR_grad} and $d(j)$ is the subvector of $d$ corresponding to the subvector $x(j)$.

\STATE \textbf{Step 3.} \[ b^{k+1} = y^{k+1}_{n+1} + \frac{1}{mL} \sum_{i=1}^{m} \frac{y_i \exp(-y_i (a_i^\top y^k + b))}{1 + \exp(-y_i( a_i^\top y^k +b ))}.\]

\STATE \textbf{Step 4.}  \[ z^{k+1}:= (1-\theta) z^k + \theta y^k + {\alpha} \left[ \begin{pmatrix} x^{k+1} \\ b^{k+1} \end{pmatrix} - y^k \right],\]
{where $\theta = \sqrt{\mu/L}$ and $\alpha = \sqrt{L/\mu}$.}\vspace{3mm}
\STATE \textbf{Step 5.} Let $k:= k+1$; return to Step 1 until convergence.
\end{algorithmic}
\end{algorithm}

Note, computing $b^{k+1}$ at each iteration is simple relative to $\bx^{k+1}$ because the subprobem \eqref{eqn:x-def} is decomposable into a minimization over $\bx$ and $b$ respectively. Since the minimization over $b$ contains no regularization terms, the first-order optimality conditions provide a simple update for $b$.

The astute observer will note $H$ is strictly instead of strongly convex making $\mu = 0$ and leaving the necessary assumptions unsatisfied. Though this is technically true, C-FISTA is undeterred. As in the underdetermined group Lasso tests, this decomposition showcases the robustness of C-FISTA when the assumptions are not formally met. It is clear the sparse-group logistic regression model could be equivalently expressed as a constrained problem; therefore, by rewriting the model as an equivalent constrained problem, $H$ would become strongly convex on the problem domain; however, as is seen in Figure \ref{fig:SGLR_comp_results}, C-FISTA outperforms both SLEP and FISTA without solving an equivalent model where the assumptions are formally satisfied. 

\vspace{-0.3in}
 
%============================================================================
\subsection{Sparse-Group Logistic Regression Numerical Experiments}\label{SGLR_Exp}
%============================================================================
In this section, we describe the results of the numerical experiments on the sparse-group logistic regression model. The set-up for the numerical experiments on the sparse-group logistic regression model were similar to the set-up for the Lasso models in \ref{Lasso_Exp}. We generated the data matrix $\bA\in \RR^{500 \times 5000}$ from the standard normal distribution letting $\ba_i$ for $i=1,2,\hdots, 500$ be the rows of $\bA$, and set $\by = (-\mathbf{e}^\top, \mathbf{e}^\top)^\top$ where $\mathbf{e} \in \RR^{250}$ is the vector of all ones. We conducted three experiments from the generated data using the same subvector groups, $\mathcal{G}_1$, $\mathcal{G}_2$ and $\mathcal{G}_3$, utilized in the group and sparse-group Lasso tests. We set $\gamma_1 = \gamma_2 = 0.01$ in our tests to ensure a non-trivial solution with substantial sparsity.

In our experiments, we compared C-FISTA, SLEP and FISTA. With regards to SLEP, we utilized {\it sgLogisticR.m}, which is a first-order proximal gradient method, to solve the model. The computational results displayed in Figure \ref{fig:SGLR_comp_results} clearly demonstrate the proven global linear convergence of C-FISTA. In the logistic regression tests, C-FISTA was a vast improvement over both FISTA and SLEP converging well over a 1000 iterations sooner to within machine precision.

%%% INSERT THE FIGURES HERE%%%
\begin{figure}%\label{fig:SGLR_comp_results}
\centering
\begin{minipage}[c]{0.325\textwidth}
\label{fig:SGLR_10}
\centering 
\includegraphics[width = \textwidth]{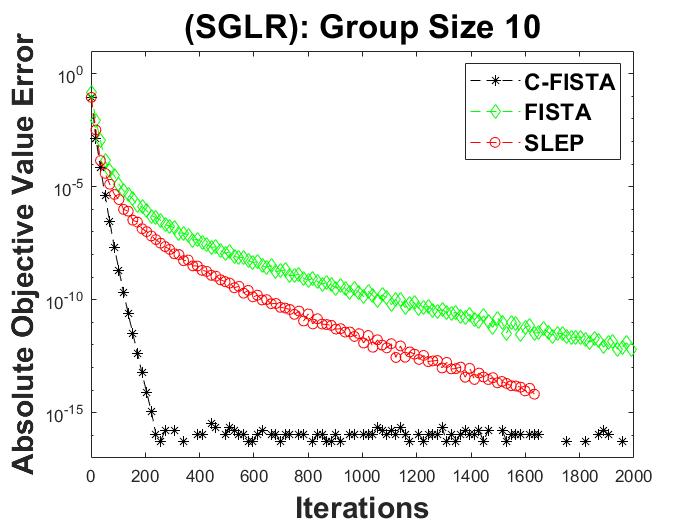}
\end{minipage}\hspace{0.02in}
\begin{minipage}[c]{0.325\textwidth}
\label{fig:SGLR_100}
\centering 
\includegraphics[width = \textwidth]{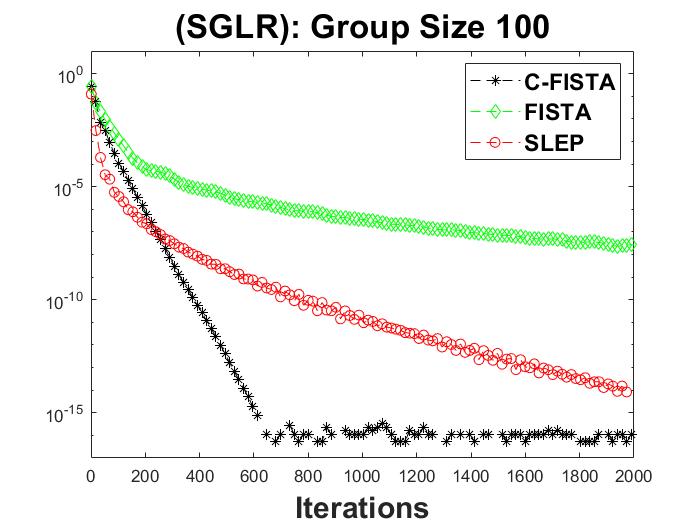}
\end{minipage}\hspace{0.02in}
\begin{minipage}[c]{0.325\textwidth}
\label{fig:SGLR_200}
\centering 
\includegraphics[width =\textwidth]{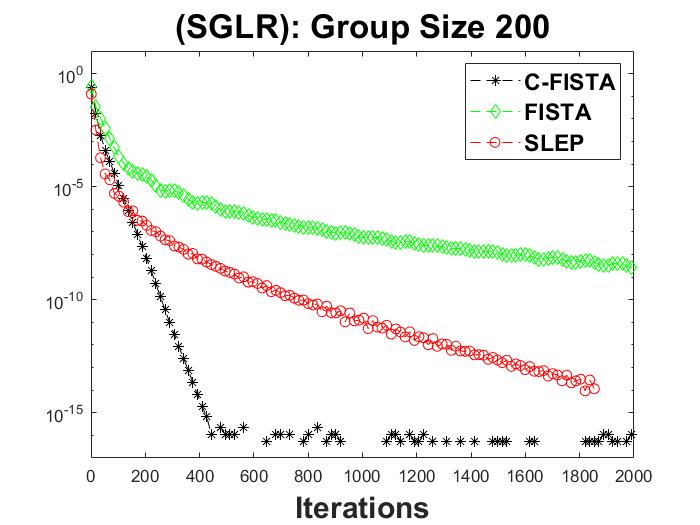}
\end{minipage}

\caption{The numerical results for the sparse-group logistic regression model $(SGLR)$ with three different subvector groupings. The $y$-axis is given in logarithmic scale and measures the absolute difference in the objective value and the optimal objective value at each iteration; $\gamma_1 = \gamma_2$ were set to be $0.01$   }\label{fig:SGLR_comp_results}
\end{figure}

%============================================================================
\subsection{C-FISTA for Geometric Programming}\label{GeoProg}
%============================================================================
In this section we discuss how to solve an instance of the geometric programming model described in Section \ref{Lasso and log reg models}. In particular, we focus on the group regularized geometric program, 
\begin{equation}\label{eqn:geo_prog_ex}
\begin{array}{lll}
& \min & \;\; \sum_{k=1}^{K} c_k x_1^{a_{k1}} x_2^{a_{k2}}\hdots x_n^{a_{kn}} + \gamma \sum_{j \in \mathcal{G}} \|\bx(j)\|_2,  \\
&\mbox{s.t}& \bx \in \XX, 
\end{array}
\end{equation}
where $\XX = \{ \bx \in \RR^n \; | \; \|\bx - \bx_0 \|_2 \leq \Delta \} \subseteq \RR^n_{++}$ with $\Delta > 0$ and $\bx_0 \in \RR^n$,  $\gamma > 0 $, $c_k > 0 $ and $\mathbf{a_{k}} := (a_{k1}, \hdots, a_{kn})^\top \leq 0$ for $k=1, \hdots, K$, and $\mathcal{G}$ is a non-overlapping partition of the vector $\bx$. Letting, 
\[ H(\bx):= \sum_{k=1}^{K} c_k e^{\ba_k^\top \bx}, \;\; \bB(\bx) := \left( \ln(x_1), \hdots, \ln(x_n) \right)^\top, \;\;  \; R(\bx):= \gamma \sum_{j \in \mathcal{G}} \|\bx(j)\|_2, \]
we can rewrite \eqref{eqn:geo_prog_ex} in the format of our standard composite optimization model. From the characteristics of $\XX$ it follows $H$ is gradient Lipschitz on the problem domain. Additionally, $H$ will be strongly convex over the domain provided  the vectors $\{a_1, \hdots, a_K\}$ span $\RR^n$. Note, this is a fairly weak condition if $K > n$. Additionally, similar to our underdetermined group Lasso discussion, although constants $r$ and $\tau$ do not exist to satisfy the necessary constraints over the span of the constraint set, such constants exist which satisfy the necessary inequalities over the constraint set. By Taylor's theorem and simple bounding we can see, 
\[ \tau \|\bx - \by\|^2 \leq \|\bB(\bx) - \bB(\by)\|^2 \leq r \|\bx-\by\|^2, \;\; \forall \bx, \by \in \XX,\]
with $r = 1/\min_{1 \leq i \leq n} \left[(\bx_0)_i - \Delta \right]^2$ and $\tau = 1/\max_{1 \leq i \leq n} \left[(\bx_0)_i + \Delta \right]^2$. Thus, with these values for $r$ and $\tau$, along with appropriate estimates for the other parameters $\mu, L$ and $\xi$, C-FISTA can be employed to solve instances of this subclass of geometric programs. The only difference in implementing C-FISTA for this problem class is a constrained optimization model of the form given in \eqref{eqn:GL_subprob} must be solved to compute the proximal mapping. In our numerical experiments, we apply a simple projected gradient method to solve this subproblem to within a specified tolerance. We now present the numerical results from the geometric programming model.

%============================================================================
\subsection{Geometric Programming Numerical Experiments}\label{GeoProg_exp}
%============================================================================
To test the performance of C-FISTA on \eqref{eqn:geo_prog_ex}, we generated random instances of the geometric programming model and compared Algorithm \ref{alg:C FISTA alg} to FISTA. For our randomly generated examples we let: 
\[\mathcal{G} = \{ (1,\hdots,100), (101, \hdots ,200), \hdots, (901, \hdots ,1000)\},\]
$K = 5000$, $\bx_0 = 10 \mathbf{e}$, $\Delta = 9$ and $\gamma = 0.1$. The values for the $c_k$'s and $\mathbf{a_k}$'s were randomly computed using the \textit{randn} command in MATLAB 2021b, and then they were transformed to ensure they held the proper sign. The $\mathbf{a_k}$'s were additionally normalized so that the elements in each vector $\mathbf{a_k}$ summed to negative one. This normalization was done to avoid extremely ill-conditioned problems. 

Figure \ref{fig:GP_results} displays an example result from our testing. For this problem, C-FISTA and FISTA would both converge exceptional fast often within 10-30 iterations. In general, FISTA would slightly outperform C-FISTA which might be a result of the parameter estimation. Nevertheless, C-FISTA demonstrated rapid convergence in a setting where the formal assumptions were not completely satisfied and a non-linear decomposition of the objective function was utilized.   

\begin{figure}%\label{fig:GP_results}
\centering
\begin{minipage}[c]{\textwidth}
\centering 
\includegraphics[width = 0.5\textwidth]{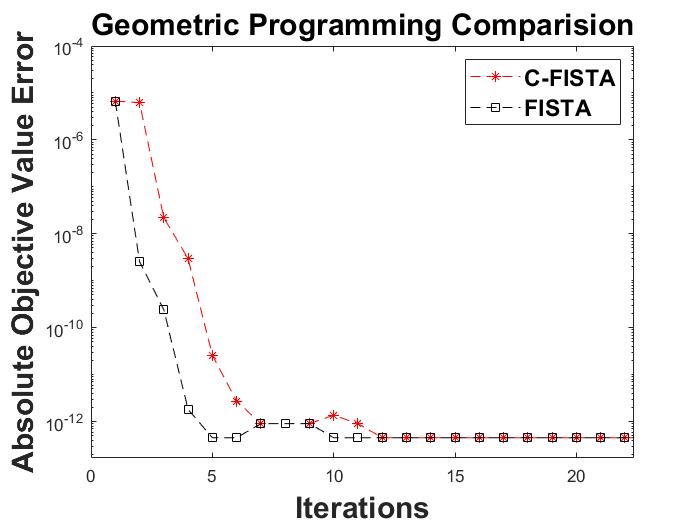}
\end{minipage}
\caption{Numerical results from the geometric programming tests. C-FISTA and FISTA were both implemented with the same Lipschitz constant estimate and both demonstrated rapid convergence to the optimal solution. The $y$-axis is given in logarithmic scale and measures the absolute difference in the objective value and the optimal objective value at each iteration }\label{fig:GP_results}
\end{figure}
}
%============================================================================
\section{Conclusions}\label{conclusions}
%============================================================================

In this paper we developed an accelerated composite version of FISTA, C-FISTA, which handles the composite optimization model \eqref{eqn:genCCO} and is a generalization of GFISTA \cite{CC19,CP16}. We proved global linear convergence for C-FISTA without having a strongly convex objective function, and demonstrated through Fenchel duality the breadth of convex models which are solvable via C-FISTA in Section \ref{Fenchel Duality}. In Section \ref{numerical results}, we demonstrated through several numerical experiments C-FISTA was able to obtain linear convergence even in settings where the formal assumptions were not satisfied. Furthermore, C-FISTA outperformed ADMM, the software package SLEP and the seminal FISTA algorithm in both Lasso and logistic regression models.

The following lines of directions could be of interest as future research topics. First, the conditions underlying our global linear convergence results may be weakened. Second, one may further study possible {\it adaptive}\/ schemes to implement C-FISTA without requiring the exact knowledge of the problem parameters. Finally, it is interesting to study how the new algorithm behaves beyond the scope of convex optimization.

%============================================================
% Declarations
%============================================================
\section{Statements and Declarations}

\noindent {\bf Funding.} This material is based upon work supported by the National Science Foundation Graduate Research Fellowship Program under Grant No. 1839286. Any opinions, findings, and conclusions or recommendations expressed in this material are those of the author(s) and do not necessarily reflect the views of the National Science Foundation.\\

\noindent{\bf Competing Interests.} The authors have no relevant financial or non-financial interests to disclose. \\

\noindent {\bf Data Availability.} The datasets generated during and/or analysed during the current study are available from the corresponding author on reasonable request.

%============================================================
% References
%============================================================
%\bibliographystyle{spmpsci}
%\bibliography{references}

\begin{thebibliography}{10}
\providecommand{\url}[1]{{#1}}
\providecommand{\urlprefix}{URL }
\expandafter\ifx\csname urlstyle\endcsname\relax
  \providecommand{\doi}[1]{DOI~\discretionary{}{}{}#1}\else
  \providecommand{\doi}{DOI~\discretionary{}{}{}\begingroup
  \urlstyle{rm}\Url}\fi

\bibitem{A92}
Auslender, A.: Asymptotic properties of the fenchel’s dual functional and
  their applications to decomposition problems.
\newblock J. Optim. Theory Appl. \textbf{73}, 427--449 (1992).
\newblock \doi{10.1007/BF00940050}

\bibitem{BS17}
Beck, A., Shtern, S.: Linearly convergent away-step conditional gradient for
  non-strongly convex functions.
\newblock Math. Program. \textbf{164}, 1--27 (2017).
\newblock \doi{10.1007/s10107-016-1069-4}

\bibitem{BT09}
Beck, A., Teboulle, M.: A fast iterative shrinkage-threshold algorithm linear
  inverse problems.
\newblock SIAM J. Imaging Sci. \textbf{2}, 183--202 (2009).
\newblock \doi{10.1137/080716542}

\bibitem{GP}
Boyd, S., Kim, S., Vandenberghe, L., Hassibi, A.: A tutorial on geometric
  programming.
\newblock Optim. Eng. \textbf{8}, 67--127 (2007).
\newblock \doi{10.1007/s11081-007-9001-7}

\bibitem{BF95}
Burke, J.V., Ferris, M.C.: A gauss-newton method for convex composite
  optimization.
\newblock Math. Program. \textbf{71}, 179--194 (1995).
\newblock \doi{10.1007/BF01585997}

\bibitem{CC19}
Calatroni, L., Chambolle, A.: Backtracking strategies for accelerated descent
  methods with smooth composite objectives.
\newblock SIAM J. Optim. \textbf{29}, 1772--1798 (2019).
\newblock \doi{10.1137/17M1149390}

\bibitem{CP16}
Chambolle, A., Pock, T.: An introduction to continuous optimization for
  imaging.
\newblock Acta Numer. \textbf{25}, 161--319 (2016)

\bibitem{AST21}
d'Aspremont, A., Scieur, D., Taylor, A.: Acceleration methods (2021).
\newblock \urlprefix\url{https://arxiv.org/abs/2101.09545v1}

\bibitem{DDL18}
Drusvyatskiy, D., Lewis, A.S.: Error bounds, quadratic growth, and linear
  convergence of proximal methods.
\newblock Math. Oper. Res. \textbf{43}, 919--948 (2018).
\newblock \doi{10.1287/moor.2017.0889}

\bibitem{DDP19}
Drusvyatskiy, D., Paquette, C.: Efficiency of minimizing compositions of convex
  functions and smooth maps.
\newblock Math. Program. \textbf{178}, 503--558 (2019).
\newblock \doi{10.1007/s10107-018-1311-3}

\bibitem{FV19}
Florea, M.I., Vorobyov, S.A.: An accelerated composite gradient method for
  large-scale composite objective problems.
\newblock IEEE Trans. Signal Process. \textbf{67}, 444--459 (2019).
\newblock \doi{10.1109/TSP.2018.2866409}

\bibitem{FV20}
Florea, M.I., Vorobyov, S.A.: A generalized accelerated composite gradient
  method: Uniting nesterov’s fast gradient method and fista.
\newblock IEEE Trans. Signal Process. \textbf{68}, 3033--3048 (2020).
\newblock \doi{10.1109/TSP.2020.2988614}

\bibitem{FHN96}
Fukushima, M., M, H., Nguyen, H.V., Strodiot, J.J., Sugimoto, T., Yamakawa, E.:
  A parallel descent algorithm for convex programming.
\newblock Comput. Optim. Appl. \textbf{5}, 5--37 (1996).
\newblock \doi{10.1007/BF00429749}

\bibitem{GM89}
Gaffke, N., Mathar, R.: A cyclic projection algorithm via duality.
\newblock Metrika \textbf{36}, 29--54 (1989).
\newblock \doi{10.1007/BF02614077}

\bibitem{HL88}
Han, S.P., Lou, G.: A parallel algorithm for a class of convex programs.
\newblock SIAM J. Control Optim. \textbf{26}, 345--355 (1988).
\newblock \doi{10.1137/0326019}

\bibitem{HQL17}
Hong, M., Luo, Z.Q.: On the linear convergence of the alternating direction
  method of multipliers.
\newblock Math. Program. \textbf{162}, 165--199 (2017).
\newblock \doi{10.1007/s10107-016-1034-2}

\bibitem{JOV9}
Jacob, L., Obozinski, G., Vert, J.P.: Group lasso with overlap and graph lasso.
\newblock In: Proceedings of the 26th Annual International Conference on
  Machine Learning, ICML '09, p. 433–440. Association for Computing
  Machinery, New York, NY, USA (2009).
\newblock \doi{10.1145/1553374.1553431}.
\newblock \urlprefix\url{https://doi.org/10.1145/1553374.1553431}

\bibitem{SLEP}
Liu, J., Ji, S., Ye, J.: SLEP: Sparse Learning with Efficient Projections.
\newblock Arizona State University (2009).
\newblock \urlprefix\url{http://yelabs.net/software/SLEP/}

\bibitem{MGB8}
Meier, L., Geer, S., Buhlmann, P.: The group lasso for logistic regression.
\newblock J. R. Stat. Soc. Ser. B. Stat. Methodol. \textbf{70}, 53--71 (2008)

\bibitem{ING19}
Necoara, I., Nesterov, Y., Glineur, F.: Linear convergence of first order
  methods for non-strongly convex optimization.
\newblock Math. Program. \textbf{175}, 69--107 (2019).
\newblock \doi{10.1007/s10107-018-1232-1}

\bibitem{N83}
Nesterov, Y.: A method for solving the convex programming problem with
  convergence rate $o(1/k^2)$.
\newblock Proceedings of the USSR Academy of Sciences \textbf{269}, 543--547
  (1983)

\bibitem{N13}
Nesterov, Y.: Gradient methods for minimizing composite functions.
\newblock Math. Program. \textbf{140}, 125--161 (2013).
\newblock \doi{10.1007/s10107-012-0629-5}

\bibitem{QSG13}
Qin, Z., Scheinberg, K., Goldfarb, D.: Efficient block-coordinate descent
  algorithms for the group lasso.
\newblock Math. Program. Comput. \textbf{5}, 143--169 (2013).
\newblock \doi{10.1007/s12532-013-0051-x}

\bibitem{RC21}
Rebegoldi, S., Calatroni, L.: Scaled, inexact and adaptive generalized fista
  for strongly convex optimization (2021).
\newblock \urlprefix\url{https://arxiv.org/abs/2101.03915}

\bibitem{RF70}
Rockafellar, R.T.: Convex Analysis, 2nd edn.
\newblock Princeton University Press, Princeton, New Jersey (1970)

\bibitem{Roos20}
Roos, K., Balvert, M., Gorissen, B.L., den Hertog, D.: A universal and
  structured way to derive dual optimization problem formulations.
\newblock INFORMS J. Optim. \textbf{2}, 229--255 (2020).
\newblock \doi{10.1287/ijoo.2019.0034}

\bibitem{SFHT13}
Simon, N., Friedman, J., Hastie, T., Tibshirani, R.: A sparse-group lasso,.
\newblock J. Comput. Graph. Statist. \textbf{22}, 231--245 (2013).
\newblock \doi{10.1080/10618600.2012.681250}

\bibitem{T96}
Tibshirani, R.: Regression shrinkage and selection via the lasso.
\newblock J. R. Stat. Soc. Ser. B. Stat. Methodol. \textbf{58}, 267--288
  (1996).
\newblock \doi{10.1111/j.2517-6161.1996.tb02080.x}

\bibitem{W90}
Wright, S.J.: Convergence of an inexact algorithm for composite nonsmooth
  optimization.
\newblock IMA J. Numer. Anal. \textbf{10}, 299--321 (1990).
\newblock \doi{10.1093/imanum/10.3.299}

\bibitem{YLY11}
Yuan, L., Liu, J., Ye, J.: Efficient methods for overlapping group lasso.
\newblock IEEE Trans Pattern Anal Mach Intell. \textbf{35}, 2104--2016 (2013).
\newblock \doi{10.1109/TPAMI.2013.17}

\bibitem{YL7}
Yuan, M., Lin, Y.: Model selection and estimation in regression with grouped
  variables.
\newblock J. R. Stat. Soc. Ser. B. Stat. Methodol. \textbf{68}, 49--67 (2007).
\newblock \doi{10.1111/j.1467-9868.2005.00532.x}

\bibitem{ZZST20}
Zhang, Y., Zhang, N., Sun, D., Toh, K.C.: An efficient hessian based algorithm
  for solving large-scale sparse group lasso problems.
\newblock Math. Program. \textbf{179}, 223--263 (2020).
\newblock \doi{10.1007/s10107-018-1329-6}

\bibitem{Z18}
Zhou, X.: On the fenchel duality between strong convexity and lipschitz
  continuous gradient (2018).
\newblock \urlprefix\url{https://arxiv.org/abs/1803.06573v1}

\bibitem{ZZ17}
Zhou, Z., So, A.M.C.: A unified approach to error bounds for structured convex
  optimization problems.
\newblock Math. Program. \textbf{165}, 689--728 (2017).
\newblock \doi{10.1007/s10107-016-1100-9}

\end{thebibliography}

%============================================================
\end{document}